\newtheorem{theorem}{Theorem}%[section]
\newtheorem{lemma}[theorem]{Lemma}
\newtheorem{proposition}[theorem]{Proposition}
\theoremstyle{definition}
\newtheorem{definition}[theorem]{Definition}
\newtheorem{example}[theorem]{Example}
\newtheorem{remark}[theorem]{Remark}
\newtheorem*{Notation warning}{Warning on notations}
\newtheorem*{Overview}{Overview}
\newtheorem*{Sketch of proof}{Sketch of proof for Theorem C}
\newtheorem{thmintro}{Theorem}
\newtheorem{corintro}[thmintro]{Corollary}
\newcommand{\gs}{\sigma }
\newcommand{\bdry}{\partial_\infty}
\newcommand{\h}{{\mathfrak h}}
\newcommand{\hh}{\hat{\mathfrak h}}
\newcommand{\hk}{\hat{\mathfrak d}}
\newcommand{\A}{{\mathfrak a}}
\newcommand{\hA}{\hat{\mathfrak a}}
\newcommand{\B}{{\mathfrak b}}
\newcommand{\hB}{\hat{\mathfrak b}}
\newcommand{\C}{{\mathfrak c}}
\newcommand{\hC}{\hat{\mathfrak c}}
\DeclareMathOperator{\cat}{CAT(0)}
\DeclareMathOperator{\Gir}{Girth}
\DeclareMathOperator{\Cay}{Cay}
\DeclareMathOperator{\Isom}{Isom}
\DeclareMathOperator{\R}{\bf{R}}
\def\aut{\mathop{\mathrm{Aut}}\nolimits}
\keywords{}
\thanks{}
\date{\today}
\begin{document}
\title[On the Girth of Groups Acting on CAT(0) Cube Complexes]{On the Girth of Groups Acting on CAT(0) Cube Complexes}
\author[A. banerjee]{Arka banerjee}
\address{Auburn University, 221 Parker Hall, Auburn, AL 36849, USA}
\email{banerjee20arka@gmail.com}
\author[D. Gulbrandsen]{Daniel Gulbrandsen}
\address{Adams State University, 208 Edgemont Blvd, Alamosa, CO 81101, USA}
\email{dangulbrandsen@yahoo.com}
\author[P. Mishra]{Pratyush Mishra}
\address{Wake Forest University, 127 Manchester Hall, Winston-Salem, NC 27109, USA}
\email{mishrapratyushkumar@gmail.com}
\author[P. Parija]{Prayagdeep Parija}
\address{Virginia Tech, Mcbryde Hall, 225 Stranger Street, Blacksburg, VA 24060, USA }
\email{prayagdeep@gmail.com}

\begin{abstract}
 We obtain a sufficient condition for lattices in the automorphism group of a finite dimensional CAT(0) cube complex to have infinite girth. As a corollary, we get a version of Girth Alternative for groups acting geometrically: any such group is either \{locally finite\}-by-\{virtually abelian\} or it has infinite girth. We produce counterexamples to show that the alternative fails in the general class of groups acting cocompactly on finite dimensional CAT(0) cube complexes by obtaining examples of non-virtually solvable groups which satisfy a law.
\end{abstract}

\maketitle
\begin{sloppypar}

\tableofcontents

\section{Introduction}

The \emph{girth} of a finite graph is defined as the length of its shortest non-trivial cycle.
If the graph has no cycle then its girth is defined to be infinite. 
Given a finitely generated group $\Gamma$ with a fixed generating set $S$, it makes sense to talk about the girth of it's Cayley graph, denoted $\Gir(\Cay(\Gamma,S))$. Then, one can define the \textbf{girth} of the group $\Gamma$ as the supremum of the girth of the Cayley graphs over all finite  generating sets $S$ of $\Gamma,$ i.e. 
$$\Gir(\Gamma)=\sup_{\Gamma=\langle S \rangle, |S|<\infty} \bigg \{\Gir(\Cay(\Gamma,S))\bigg \}$$
Note that $\Gir(\Cay(\Gamma,S))$ measures the length of the shortest non-trivial word relation in the Cayley graph $\Cay(\Gamma, S)$. Some examples of groups with finite girth are finite groups, non-cyclic nilpotent groups and solvable groups. In fact, any finitely generated group satisfying a non-trivial law has finite girth, \cite{Schleimer02}, \cite{AKHMEDOV2003198}. 
Moreover, if a group has a finite index subgroup with finite girth, then the group itself has finite girth [see \cite{Schleimer02} for details]. 
In particular, virtually solvable groups have finite girth.
On the other hand, any free group has infinite girth, in particular $\mathbb{Z}$ has infinite girth (which can be seen as a degeneracy case).

\medskip

Given a class of groups, it is natural to ask if one can classify each group in the class in terms of finite and infinite girth. Motivated by the celebrated Tits Alternative \cite{Tits1972FreeSI}, Girth Alternative \cite{akhmedov2005girth} is defined as follows.
\medskip

\noindent
\textbf{Girth Alternative:} \label{thm: GA}\textit{For a given class $\mathcal{C}$ of finitely generated groups, $\mathcal{C}$ is said to satisfy the Girth Alternative if for any group $\Gamma\in \mathcal{C}$, either $\Gamma$ is virtually solvable (and hence has finite girth unless isomorphic to $\mathbb{Z}$) or girth of $\Gamma$ is infinite.}
\medskip

The Girth Alternative is similar in spirit to the Tits Alternative (which was first introduced by Jacques Tits in 1972, where he proved the alternative for the class of linear groups). For a general class of groups, the Tits Alternative can be stated as follows.
\medskip

\noindent
\textbf{Tits Alternative:} \label{thm:TA}\textit{For a given class $\mathcal{C}$ of finitely generated groups, $\mathcal{C}$ is said to satisfy Tits Alternative if for any group $\Gamma \in \mathcal{C}$, either $\Gamma$ is virtually solvable or $\Gamma$ contains a non-abelian free subgroup $\mathbb{F}_2$.}
\medskip

Some classes of finitely generated groups satisfying the Tits Alternative are linear groups (\cite{Tits1972FreeSI}), Gromov hyperbolic groups (follows from the fact that it contains a cyclic subgroup of finite index or $\mathbb{F}_2$ as a subgroup, see Theorem 5.3.E in \cite{Gro87}), mapping class groups of compact surfaces (\cite{MR0745513}, \cite{10.1215/S0012-7094-83-05046-9}), Out($\mathbb{F}_n$) (\cite{Bestvina1997TheTA}, \cite{Bestvina1997TheTA2}), groups acting on CAT(0) cube complex (\cite {sageev_wise_2005}, \cite{Caprace_2011}), etc. On the other hand, Thompson's group F does not satisfy Tits Alternative: neither Thompson's group F contains a copy of $\mathbb{F}_2$ (\cite{MR0782231}) nor it is virtually solvable (\cite{MR2707536}).
 
\medskip

It has been observed that for certain classes of groups for which Tits Alternative holds, the property of having infinite girth coincides with the property of containing a non-abelian free group \footnote{Although, this observation does not seem to hold in general, see \cite{akhmedov2014girth}}.
A comprehensive theory of the Girth Alternative for different classes of groups has been developed by many authors in the past two decades. 
In \cite{AKHMEDOV2003198}, \cite{akhmedov2005girth}, Akhmedov has introduced and proved the Girth Alternative for the class of hyperbolic, linear,
one-relator and $\mbox{PL}_+(I)$ groups. In \cite{yamagata}, Yamagata proves the Girth Alternative for convergence groups and irreducible subgroups of the mapping class groups. Independently in \cite{nakamura2011girth},
Nakamura proves the Alternative for all subgroups of mapping class groups and also for the
subgroups of Out($\mathbb{F}_n$) containing the irreducible elements having irreducible powers. Recently, Akhmedov and the third author of this article have studied the Girth Alternative phenomenon for HNN extensions and amalgamated free products \cite{AkhmedovMishra2023}.
\medskip

Therefore, it is natural to study the Girth Alternative for the class of groups for which Tits Alternative has been investigated. 
In this paper, we focus on groups acting on $\cat$ cube complex. 
Different versions of Tits Alternative for groups acting on CAT(0) cube complex are known from the work of  Sageev--Wise \cite{sageev_wise_2005} and  Caprace--Sageev \cite{Caprace_2011}. Inspired by the work of Caprace--Sageev \cite{Caprace_2011}, we prove a Girth Alternative for lattices in the group of automorphisms of finite dimensional cube complex.

\begin{thmintro}\label{intro:v2}
    \emph{Let $X = X_1 \times \dots \times X_n$ be a product of $n$ irreducible, unbounded, and locally compact $\cat$ cube complexes such that $\aut(X_i)$ acts cocompactly and essentially on $X_i$ for all $i$.}       
      
      \emph{Then for any (possibly non-uniform) lattice $\Gamma \leq \aut(X)$ acting properly  on $X$, 
either $\Gamma$ is \{locally finite\}-by-\{virtually abelian\} or 
$\Gir(\Gamma)=\infty$.}
\end{thmintro}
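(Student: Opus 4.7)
The plan is to run the Caprace--Sageev structure theory for lattices acting on products of $\cat$ cube complexes to obtain a clean dichotomy (elementary action on every factor versus existence of rank-one contracting elements on some factor), and then to quantitatively upgrade the free-subgroup conclusion of the non-elementary case into an infinite-girth conclusion via ping-pong with arbitrarily small attracting/repelling domains. Since each $X_i$ is irreducible, $\aut(X)=\aut(X_1)\times\cdots\times\aut(X_n)$, and I would work throughout with the coordinate projections $\pi_i\colon\Gamma\to\aut(X_i)$.

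For the dichotomy I would argue factor by factor. On each $X_i$, either $\pi_i(\Gamma)$ fixes a point of the Roller boundary and preserves a proper invariant convex subcomplex, or it is \emph{non-elementary}: by the double-skewering lemma of Caprace--Sageev applied to the essential cocompact action of $\aut(X_i)$, it then contains a pair of rank-one contracting isometries with skewering half-spaces that can be prescribed rather freely. If the elementary alternative holds for \emph{every} $i$, then essentiality of each $\aut(X_i)$ forces the invariant subcomplex to be a Euclidean flat, so $\Gamma$ preserves a product of flats inside $X$; properness of the $\Gamma$-action then implies that the kernel of the $\Gamma$-action on this product of flats is locally finite, while a Bieberbach-type argument applied to the quotient yields virtual abelianness, giving the \{locally finite\}-by-\{virtually abelian\} conclusion.

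Otherwise, on some factor $X_i$ the action is non-elementary. Given any finite generating set $S=\{s_1,\dots,s_k\}$ of $\Gamma$ and any integer $N\geq 1$, I would use repeated double-skewering, together with cocompactness of $\aut(X_i)$, to produce two elements $a,b\in\Gamma$ whose attracting/repelling half-space domains in $X_i$ are so small and disjoint that for every word of length at most $N$ in the alphabet $S\cup\{a^{\pm 1},b^{\pm 1}\}$, ping-pong rules out a cancellation producing the identity. Passing to the modified generating set $S'=\{s_1 a^{p_1}b^{q_1},\dots,s_k a^{p_k}b^{q_k},a,b\}$ with suitably large exponents, every non-trivial word of length $\leq N$ in $S'$ acts non-trivially on $X_i$. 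Thus $\Gir(\Cay(\Gamma,S'))>N$, and since $N$ is arbitrary, $\Gir(\Gamma)=\infty$.

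The hardest part is the quantitative ping-pong in the non-elementary case: turning the mere existence of free subgroups (which already follows from the Caprace--Sageev Tits alternative) into the uniform control needed to defeat arbitrarily long relations. Cocompactness of $\aut(X_i)$ lets one translate skewering pairs anywhere on $X_i$, and taking high powers shrinks their attracting domains, but one must verify these choices simultaneously avoid the translates of a prescribed compact reference set under all words of length $\leq N$ in $S$. A secondary subtlety is that for a non-uniform lattice the projection $\pi_i(\Gamma)$ need not be discrete, so ping-pong players detected in $\aut(X_i)$ must ultimately be lifted to honest elements of $\Gamma$ using the lattice condition together with properness of the action on the product.
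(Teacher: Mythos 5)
Your two-case strategy does not match the structure of the theorem, and both halves have real gaps. For the first alternative, you assume that ``elementary on every factor'' (a fixed point in the Roller boundary of each $X_i$ plus an invariant convex subcomplex) forces an invariant Euclidean flat, and then you apply properness and Bieberbach. But a fixed boundary point for a subgroup yields no flat: Lemma~\ref{invariant flat} produces flats only from essentiality of the \emph{full} $\aut(X_i)$-action in the $\R$-like case, not from elementarity of $\Gamma$ or of its projections, and a lattice fixing a point at infinity need not be \{locally finite\}-by-\{virtually abelian\} (whether such a group can even have infinite girth is left open in the paper, Question~4). The paper never proves ``elementary $\Rightarrow$ first alternative''; instead it invokes \cite[Theorem~1.7]{CL10} to identify the first alternative with amenability of $\Gamma$, so that it suffices to show non-amenability implies $\Gir(\Gamma)=\infty$, and the only configuration that has to be excluded is the one where \emph{every} irreducible factor is $\R$-like; there amenability is extracted from a fixed vertex in the Roller boundary of an equivariantly cubulated invariant flat (Lemmas~\ref{l:positive translation length}--\ref{cubification} together with \cite[Theorem A.5]{CL}), giving a contradiction. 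Note also that $\aut(X)$ is only \emph{virtually} the product $\aut(X_1)\times\cdots\times\aut(X_n)$ (Proposition~\ref{p:product decomposition}), so your coordinate projections $\pi_i$ are not defined on all of $\Gamma$ to begin with.

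For the second alternative, single-factor ping-pong with the whole group and an arbitrary generating set breaks down: generators of $\Gamma$ may permute isomorphic factors, so translates of your half-space domains under words in $S$ need not lie in $X_i$ at all, and your elements $a,b$ are controlled only on $X_i$ --- on the remaining factors they may act elliptically, so no power of them pushes the complement of your domains where you need it. This is precisely where the lattice hypothesis enters the paper's proof: Theorem~\ref{Regular elements in lattices} supplies \emph{regular} elements double skewering prescribed strongly separated pairs in every factor simultaneously, the ping-pong is run on $Y=\sqcup_i X_i$ via Nakamura's criterion (Theorem~\ref{t:criteria for infinite girth}), and the required disjointness of the chosen poles from all generator-translates is arranged by Lemma~\ref{l:many facing hyperplanes} (flipping) and Lemma~\ref{l:intersecting hyperspaces}; your appeal to the lattice condition merely to ``lift ping-pong players from $\aut(X_i)$'' is not how it is used, and discreteness of $\pi_i(\Gamma)$ is a red herring. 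Passing to the finite-index factor-preserving subgroup $\Gamma'$ would make your single-factor argument viable (cf.\ Remark~\ref{lattice vs non-lattice2}), but it only gives $\Gir(\Gamma')=\infty$, and it is unknown --- and explicitly flagged as open in the paper --- whether infinite girth passes from a finite-index subgroup to the ambient group. Your quantitative ping-pong with the modified generating set $S'$ essentially re-derives Nakamura's criterion, which is fine in principle; the genuinely missing content is the construction of simultaneous contracting (regular) partners and of attracting/repelling half-spaces disjoint from all generator translates, which is the technical heart of Theorems~\ref{premain theorem} and~\ref{Girth alternative v2}.
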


Note that if a group has an upper bound on the order of the finite subgroups then any locally finite subgroup of that group has finite order. Therefore, the following is an immediate consequence of Theorem~\ref{intro:v2}.
\begin{corintro}\label{intro:v3}  
\emph{Let $X = X_1 \times \dots \times X_n$ be a product of $n$ irreducible, unbounded, and locally compact $\cat$ cube complexes such that $\aut(X_i)$ acts cocompactly and essentially on $X_i$ for all $i$.       
         Suppose $\Gamma \leq \aut(X)$ is a lattice (possibly non-uniform) acting properly on $X$ and $\Gamma$ has a bound on the order of its  finite subgroups.}

\emph{Then  $\Gamma$ is either virtually abelian or 
$\Gir(\Gamma)=\infty$. In particular, $\Gamma$ satisfies the Girth Alternative.}
\end{corintro}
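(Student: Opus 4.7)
The plan is to derive this corollary directly from Theorem~\ref{intro:v2}, using the hypothesis on orders of finite subgroups to collapse the first alternative \{locally finite\}-by-\{virtually abelian\} to simply \emph{virtually abelian}. The substantive content is contained in Theorem~\ref{intro:v2}, and the corollary is a short algebraic postprocessing step, so the main obstacle in the whole package is of course Theorem~\ref{intro:v2} itself.

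First I would apply Theorem~\ref{intro:v2} to $\Gamma$: either $\Gir(\Gamma) = \infty$ and we are done, or there is a normal locally finite subgroup $N \trianglelefteq \Gamma$ with $\Gamma/N$ virtually abelian. In the second case I would verify the remark noted before the statement: writing $N$ as the directed union of its finitely generated (hence finite) subgroups, each has order at most the global bound $n$ on orders of finite subgroups of $\Gamma$. An ascending chain of such subgroups is therefore bounded in order and eventually stationary, so $N$ is finitely generated, and hence finite.

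Finally I would upgrade \{finite\}-by-\{virtually abelian\} to \emph{virtually abelian}. Choose a finite-index $H \leq \Gamma$ with $N \leq H$ and $H/N$ abelian, and let $K := C_H(N)$; this still has finite index in $H$, since $H/K$ embeds into the finite group $\aut(N)$. Now $K \cap N = Z(N)$ is central in $K$ (as $K$ centralizes $N$), and $K/Z(N)$ embeds into the abelian group $H/N$, so $[K,K] \subseteq Z(N)$ is finite and central in $K$. Passing to the preimage $L$ of a torsion-free finite-index subgroup of the finitely generated abelian group $K/[K,K]$ gives a central extension
\[
1 \longrightarrow [K,K] \longrightarrow L \xrightarrow{\pi} \mathbb{Z}^d \longrightarrow 1 .
\]
With $m = \exp([K,K])$, the $2$-step identity $[x^m, y^m] = [x,y]^{m^2}$ forces $L_m := \pi^{-1}(m\mathbb{Z}^d)$ to be abelian: its generators consist of $m$-th powers of preimages of a basis of $\mathbb{Z}^d$ together with the central subgroup $[K,K]$, and all such pairs commute. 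Thus $L_m$ is an abelian subgroup of $L$ of index $m^d$, hence an abelian finite-index subgroup of $\Gamma$, completing the dichotomy.
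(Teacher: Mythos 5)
Your proposal is correct and follows the paper's own route: apply Theorem~\ref{intro:v2}, use the bound on orders of finite subgroups to conclude the locally finite normal subgroup is actually finite, and then upgrade \{finite\}-by-\{virtually abelian\} to virtually abelian --- a step the paper treats as immediate and you spell out in detail. Your centralizer/central-extension argument for that last step is valid; just note it uses finite generation of $\Gamma$ (harmless here, since girth only concerns finitely generated groups, but without it the implication \{finite\}-by-\{abelian\} $\Rightarrow$ virtually abelian genuinely fails, e.g.\ for infinite extraspecial $2$-groups).
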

We obtain Theorem~\ref{intro:v2} as a consequence of the (proof of) following more general theorem that gives  sufficient conditions for a lattice to have infinite girth.

\begin{thmintro}\label{intro:v1}
  \emph{Let $X = X_1 \times \dots \times X_n$ be a product of $n$ irreducible, unbounded, locally compact $\cat$ cube complexes such that $\aut(X_i)$ acts cocompactly and essentially on $X_i$ for all $i$.}

\emph{Then for any (possibly non-uniform) lattice $\Gamma \leq \aut(X)$, 
either $\Gamma$ virtually fixes some point in $X\cup \bdry X$ or 
$\Gir(\Gamma)=\infty$.}
\end{thmintro}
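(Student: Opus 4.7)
The plan is to assume $\Gamma$ does not virtually fix any point of $X\cup\bdry X$ and, under this hypothesis, to show that for every finite generating set $S=\{s_1,\dots,s_k\}\subset\Gamma$ and every integer $N$, one can produce another finite generating set $S'$ of $\Gamma$ such that $\Cay(\Gamma,S')$ has no non-trivial loop of length at most $N$; this forces $\Gir(\Gamma)=\infty$. The first step is to extract, inside $\Gamma$ itself, two hyperbolic elements $a,b\in\Gamma$ whose projection to each factor $X_i$ is a rank-one (contracting) isometry and whose endpoint pairs in $\bdry X_i$ are pairwise disjoint. For this I would combine the essential cocompact action of $\aut(X_i)$ on $X_i$ with the Caprace--Sageev rank-rigidity and double-skewering machinery to first produce such ``regular'' hyperbolic isometries in $\aut(X)$, and then transfer them into $\Gamma$ using the lattice property.

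The second step is dynamical. Since $a$ and $b$ act with North--South dynamics on each $\bdry X_i$, I would choose an integer $M=M(N,S)$ and words $f_1,\dots,f_k\in\langle a^M,b^M\rangle$ of sufficiently large length that the set
\[
S'\;=\;\{\, a^{M},\; b^{M},\; s_1 f_1,\; s_2 f_2,\; \dots,\; s_k f_k\,\}
\]
generates $\Gamma$---each $s_i=(s_i f_i)\,f_i^{-1}$ with $f_i\in\langle a^M,b^M\rangle\subset\langle S'\rangle$---and satisfies the following ping-pong property: after expanding each letter, any non-trivial reduced $S'^{\pm 1}$-word of length at most $N$ becomes a product that alternates short words in $S^{\pm 1}$ with long words in $\langle a,b\rangle$ whose attracting and repelling basins on $\bdry X$ are pairwise disjoint. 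A standard ping-pong computation then shows that such a product moves a chosen basepoint in $\bdry X$, hence is non-trivial in $\Gamma$, so $\Cay(\Gamma,S')$ has girth exceeding $N$.

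The hardest part will be the first step. Inside $\aut(X)$, rank-rigidity supplies an abundance of rank-one isometries on each factor $X_i$, but an element of the lattice $\Gamma$ whose projection to $\aut(X_1)$ is rank-one may project elliptically (or parabolically) to some other $\aut(X_i)$: there is no a priori reason for a single element of $\Gamma$ to be ``simultaneously rank-one'' across the product decomposition. Overcoming this requires a careful interplay of the irreducibility of each $X_i$, essential cocompactness, the lattice property of $\Gamma$, and the no-virtual-fixed-point hypothesis. Once regular ping-pong pairs with the correct factorwise behaviour have been produced inside $\Gamma$, steps 2 and 3 reduce to a fairly standard dynamical argument on $\bdry X$.
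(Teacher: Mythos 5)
Your overall strategy is the same one the paper follows (two factor-wise contracting elements with controlled dynamics, then a girth criterion in the style of Akhmedov--Nakamura), but as written there are two genuine gaps. The step you yourself flag as hardest is exactly where the proposal stops short, and it is precisely where the lattice hypothesis enters: Caprace--Sageev's theorem on regular elements in lattices (Theorem~\ref{Regular elements in lattices}) states that for any choice of nested halfspaces in each factor there is a single element of $\Gamma$ double skewering all of them simultaneously, hence contracting in every factor; this is a quotable result, not something requiring a new ``careful interplay,'' but your proposal neither invokes it nor offers a substitute, so Step 1 remains unproved. Relatedly, you never use the hypothesis that $\Gamma$ does not virtually fix a point of $X\cup\bdry X$. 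That hypothesis is what rules out $\R$-like factors and yields, in each $X_i$, facing triples containing strongly separated pairs; without it you cannot even guarantee two regular elements with disjoint poles, and it is the source of the dichotomy in the statement.

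The dynamical step is also underspecified in a way that matters. What makes the girth argument work (this is the content of Nakamura's criterion, Theorem~\ref{t:criteria for infinite girth}, which the paper uses as a black box and which your generating-set modification essentially re-derives) is not merely that $a$ and $b$ have disjoint endpoint pairs: one needs the attracting/repelling sets of one element to be disjoint from the images of those of the other under every generator $s_j^{\pm1}$, together with a basepoint avoiding all of these finitely many translates. Otherwise some $s_j$ can carry an attracting basin of $a$ onto a repelling basin of $b$, the telescoping in your alternating products fails, and no choice of long words $f_i$ repairs it. Securing this disjointness is the bulk of the paper's proof: fix $\sigma$, list the generator-translates of its poles, and use the abundance of facing strongly separated pairs (Lemma~\ref{l:many facing hyperplanes}) together with the disjointness lemma for descending sequences of halfspaces (Lemma~\ref{l:intersecting hyperspaces}) to produce a second regular element $\tau$ whose poles avoid that entire finite list; the ping-pong sets are then unions of halfspaces in $\bigsqcup_i X_i$ rather than subsets of $\bdry X$, where factor-wise contracting elements do not have genuine North--South dynamics and generators may permute isomorphic factors. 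Your proposal addresses neither the generator-translate disjointness nor how the second element achieving it is found, so the ``standard ping-pong computation'' you appeal to does not yet exist.
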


In contrast to the above theorems, it is also interesting to investigate groups which do not satisfy Girth Alternative. This essentially boils down to finding groups satisfying a law, but which are not virtually solvable. 
In \cite{Cornulier_Mann}, the authors produce such examples of groups. More precisely, they construct finitely generated residually p-groups which satisfy a law, but are not virtually solvable. In fact, their examples show that both the Girth Alternative and Tits Alternative fail for the class of residually finite groups. However, we do not know if the examples of Cornulier and Mann can be realized as groups acting on CAT(0) cube complex. Therefore, we provide our own counterexample in Section \ref{section:counterexample} to show that Girth Alternative, as stated in Corollary~\ref{intro:v3}, fails for the class of groups acting cocompactly on finite dimensional CAT(0) cube complexes.

\medskip

\begin{Sketch of proof} 
There are three main ingredients used in our proof. The first is a theorem of Nakamura \cite{nakamura2011girth} (see Theorem~\ref{t:criteria for infinite girth}) that gives a sufficient condition for a group to have infinite girth.
In particular, the condition asks for two elements that generate a free subgroup. 
In order to ensure such elements always exist under the hypotheses of Theorem~\ref{intro:v1}, we use our second ingredient, which is a theorem of Caprace and Sageev \cite{Caprace_2011} (see Theorem~\ref{Regular elements in lattices}).
However, infinite girth does not follow just from a non-abelian free subgroup. 
Rather, it follows from more subtle properties of generators of the free subgroup, which are required by Nakamura's theorem.
To account for these extra properties, we have to choose `correct' generators.
To this end we use our third ingredient, Lemma~\ref{l:many facing hyperplanes}, which asserts the existence of lots of mutually disjoint halfspaces in the cube complex under certain conditions.
Combining ingredients two and three yields infinitely many candidates for generators of different free subgroups.
 The primary objective in the proof of Theorem~\ref{intro:v1} is then to find the `correct' generators from this lot that satisfy the properties needed to apply Nakamura's theorem.
As a result, we obtain infinite girth.
\end{Sketch of proof}

\begin{Overview}The paper is organized as follows. In Section~\ref{section:preliminaries}, we discuss some basic concepts pertaining to the theory of CAT(0) cube complexes. Readers who are familiar with CAT(0) cube complexes may skip this section.  
We prove Theorem~\ref{intro:v1} and Theorem~\ref{intro:v2} in Section~\ref{section:GAforlattices} where they appear as Theorem~\ref{premain theorem} and Theorem~\ref{Girth alternative v2}, respectively.
In Section~\ref{section:counterexample}, we give an example which shows Corollary~\ref{intro:v3} does not hold if both the assumptions properness and having a bound on the order of finite subgroups are omitted.
\end{Overview}

\section{Preliminaries}\label{section:preliminaries} 

\subsection{Cube Complexes} 

Let $I=[0,1]$. We define an {\bf $n$-cube}, or simply {\bf cube}, as the product $I^n$ (with $I^0=\{0\}$). A {\bf face} of $I^n$ is the restriction of some number (possibly zero) of its coordinates to $0$ or $1$, and a {\bf midcube} of $I^n$ is the restriction of precisely one of its coordinates to $1/2$. Cleary, midcubes of $I^n$ can be realized as $(n-1)$-cubes and we will often treat them as such. We will refer to $0$-cubes, $1$-cubes, and $2$-cubes as vertices, edges, and squares respectively.
\medskip

A {\bf cube complex} is a cell complex where the cells are $n$-cubes, for various $n$, and the attaching maps are Euclidean isometries in faces. We say the dimension of a cube complex $X$ is $n$, or $X$ is {\bf finite dimensional}, if $X$ contains $n$-cubes but not $(n+1)$-cubes, for some $n$. If no such $n$ exists, we say that $X$ is {\bf infinite dimensional}. $X$ is {\bf locally finite} if every vertex of $X$ meets only finitely many cubes. In the setting of cube complexes, being locally finite is equivalent to being locally compact.
\medskip

Two cube complexes $X$ and $Y$ are {\bf isomorphic} if there is a bijection $\phi$ between their vertex sets such that, if $c$ is an $n$-cube in $X$ then $\phi(c)$ is an $n$-cube in $Y$. In particular, an isomorphism of cube complexes maps adjacent vertices to adjacent vertices. 
\medskip

A {\bf subcomplex} $Y$ of a cube complex $X$ is any collection of cubes in $X$ inheriting the same attaching maps. We will denote subcomplexes by $Y\le X$. We call $X$ {\bf irreducible} if there do not exist cube complexes $X_1,X_2$ with $X=X_1\times X_2$. Note that, if $X=X_1\times X_2$, then $X$ contains subcomplexes isomorphic to $X_1$ and $X_2$. Cube complexes need not be connected.
\medskip

A cube complex $X$ is a CAT(0) space if extending the Euclidean metric on individual cubes to the path length metric on $X$ yields a CAT(0) space, in the traditional sense. That is, triangles are at least as thin as their comparison triangles in Euclidean space. Due to Gromov \cite{Gro87}, there is a combinatorial requirement that, if satisfied by $X$, will guarantee that $X$ is locally CAT(0). A finite dimensional cube complex $X$ is {\bf locally CAT(0)} if the link of every vertex in $X$ is a simplicial flag complex. If, in addition, $X$ is simply-connected then being locally CAT(0) implies that $X$ is CAT(0). The equivalence between the being locally CAT(0) and (locally) satisfying the classical definition of CAT(0) was extended to infinite dimensional CAT(0) cube complexes in \cite{https://doi.org/10.1112/jtopol/jts035}.
\medskip

From now on, unless stated otherwise, all cube complexes considered are assumed to be CAT(0), locally finite, and finite dimensional. Note that for all such cube complexes $X$, the decomposition $X=X_1\times\cdots\times X_n$ into irreducible factors is unique up to a permutation of its factors by the following proposition.
\medskip
\begin{proposition}[\cite{Caprace_2011}]\label{p:product decomposition}
    A finite dimensional $\cat$ cube complex $X$ admits a canonical decomposition
    \[
    X=X_1\times X_2\times \ldots \times X_p
    \]
    into a product of irreducible cube complexes $X_i$. Every automorphism of $X$ preserves that decomposition, up to a permutation of possibly isomorphic factors. In particular, the image of canonical embedding
    \[
    \aut(X_1)\times \aut(X_2)\times \ldots \times \aut(X_p)\hookrightarrow \aut(X)
    \]
    has finite index in $\aut(X)$.
\end{proposition}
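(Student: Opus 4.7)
My approach is to extract the canonical product decomposition from the combinatorics of hyperplanes. Recall that two hyperplanes of $X$ either cross or are disjoint. The starting observation is that a non-trivial product decomposition $X = A \times B$ corresponds bijectively to a partition $\mathcal{H}(X) = \mathcal{H}_A \sqcup \mathcal{H}_B$ of the hyperplane set into two non-empty classes such that every hyperplane in $\mathcal{H}_A$ crosses every hyperplane in $\mathcal{H}_B$. I would first establish this correspondence: given such a partition, the factor $A$ is recovered (up to isomorphism) as the quotient of $X$ obtained by identifying two vertices whenever no hyperplane of $\mathcal{H}_A$ separates them, and the diagonal map $X \to A \times B$ is then an isomorphism of cube complexes.

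With the correspondence in hand, declare two distinct hyperplanes equivalent if they do \emph{not} cross, and let $\approx$ be the transitive closure of this relation on $\mathcal{H}(X)$. The equivalence classes $\mathcal{H}_1, \ldots, \mathcal{H}_p$ of $\approx$ form the finest partition of $\mathcal{H}(X)$ such that hyperplanes from distinct classes always cross; the associated factors yield $X = X_1 \times \cdots \times X_p$, and each $X_i$ must be irreducible, for otherwise $\mathcal{H}_i$ would admit a further refinement into two mutually crossing sub-classes, contradicting that it is a single $\approx$-class. Since $\approx$ is defined intrinsically, the decomposition is unique up to reordering of the factors, and $p$ is finite because each factor contributes at least one orthogonal hyperplane direction, bounded above by $\dim X$.

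For the automorphism statement, any $\phi \in \aut(X)$ permutes hyperplanes and preserves crossings, hence preserves $\approx$. This yields a homomorphism $\aut(X) \to \mathrm{Sym}(\{\mathcal{H}_1, \ldots, \mathcal{H}_p\})$ whose kernel consists precisely of automorphisms preserving each class $\mathcal{H}_i$; by the correspondence above, this kernel is exactly the image of the canonical embedding $\aut(X_1) \times \cdots \times \aut(X_p) \hookrightarrow \aut(X)$. Since the target permutation group is finite, the kernel has finite index, giving the final assertion. (One may further refine the image: an automorphism can only send $X_i$ to $X_{\sigma(i)}$ when the two are isomorphic, so the image in fact lies in the subgroup respecting isomorphism types.)

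The main obstacle I anticipate is the reconstruction step, namely upgrading a hyperplane partition into a genuine isomorphism of cube complexes $X \cong A \times B$. The subtlety is that one must show the diagonal map is both injective and surjective at the level of vertices, and compatible with the cube structure. This requires exploiting the fact that every vertex of $X$ is uniquely determined by its position relative to each hyperplane (the sign-sequence/ultrafilter description), together with the hypothesis that every hyperplane in $\mathcal{H}_A$ crosses every hyperplane in $\mathcal{H}_B$ to guarantee that all admissible sign combinations are realized in $X$.
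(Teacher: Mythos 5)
The paper itself gives no proof of this proposition---it is quoted from Caprace--Sageev \cite{Caprace_2011}---and your argument is essentially the proof given in that source: a product decomposition corresponds to a partition of the hyperplane set into classes that pairwise cross, the irreducible decomposition is the finest such partition (the connected components of the non-crossing graph, of which there are at most $\dim X$ since one hyperplane chosen from each class gives a pairwise-crossing family spanning a cube), and any automorphism preserves crossing and hence permutes the classes, which yields the finite-index statement. Your proposal is correct, and the two steps you flag as delicate are indeed the only ones requiring real verification, and your sketches of them are the right ones: surjectivity of $X\to A\times B$ follows because no halfspace of $\mathcal{H}_A$ is nested with a halfspace of $\mathcal{H}_B$ (they cross), so the union of consistent DCC ultrafilters on the two classes is again a consistent DCC ultrafilter, while the identification of the kernel with $\aut(X_1)\times\cdots\times\aut(X_p)$ comes from noting that if $\phi$ preserves the class $\mathcal{H}_i$ then the orientation of $\mathcal{H}_i$ at $\phi(v)$ depends only on the orientation of $\mathcal{H}_i$ at $v$, so $\phi$ splits as a product of factor automorphisms.
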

When it is necessary to treat $X$ as a metric space, we will use $d$ to denote the CAT(0) metric. With this metric, $X$ is a unique geodesic space.

\subsection{Hyperplanes and Halfspaces}

For a cube complex $X$, we say two edges $e_1,e_2\in X$ are {\bf related} if $e_1\cap e_2=\emptyset$ and there is a square in $X$ containing $e_1$ and $e_2$.

Informally, $e_1$ and $e_2$ are related if they are opposite edges of a square. A {\bf hyperplane} in $X$, typically denoted as $\hh$, is the transitive closure of this relation on a single edge. The collection of all hyperplanes in a cube complex $X$ is denoted $\hat{\mathcal{H}}_X$. For the {\bf geometric realization of a hyperplane}, first observe that any nonempty collection of related edges in an $n$-cube $c$ uniquely determine a midcube of $c$. Midcubes in $X$ therefore inherit a relation from the edges. One then sees that two midcubes $M$ and $N$ are related whenever their intersection is a common face. The geometric realization of $\hh$ is the transitive closure of this relation on a single midcube. In this instance, midcubes $M$ and $N$ inherit attaching maps from their corresponding cubes.
\medskip

For the remainder, we use the same symbol, $\hh$, to denote both a hyperplane and its geometric realization.
\medskip

Two hyperplanes $\hh_1$ and $\hh_2$ are {\bf transverse}, denoted $\hh_1\pitchfork\hh_2$, if there is a square in $X$ containing edges $e_1$ and $e_2$ with $e_1\in\hh_1,\;e_2\in\hh_2$, and $e_1\cap e_2\neq\emptyset$. Note that $\hh_1\pitchfork\hh_2$ if and only if their geometric realizations intersect. If $\hh_1$ and $\hh_2$ are not transverse, they are {\bf nested}. We say $\hh_1$ and $\hh_2$ are {\bf separated} if there is a third hyperplane containing $\hh_1$ and $\hh_2$ in complimentary halfspaces, and they are {\bf strongly separated} if they are separated and there does not exist a hyperplane transverse to both $\hh_1$ and $\hh_2$.
\medskip

\noindent{\bf Properties of Hyperplanes}. Let $X$ be a CAT(0) cube complex and $\hh\in\hat{\mathcal{H}}_X$
\begin{enumerate}
    \item $\hh$ is a CAT(0) cube complex in its own right.
 \item $\hh$ separates $X$ into two components.    
\end{enumerate}
\medskip

\noindent Recent proofs of these properties can be found in \cite{10.1007/978-3-319-43674-6_4}.
We denote the closures of the two components of $X\setminus\hh$ by $\h$ and $\h^*$, which we also refer to as {\bf halfspaces}.
Choosing which component to call $\h$ for all $\hh\in\hat{\mathcal{H}}_X$ defines a {\bf labeling} of $\hat{\mathcal{H}}_X$. The collection of halfspaces of $X$ is denoted $\mathcal{H}_X$. A halfspace $\h$ is {\bf deep} if the distance from $\hh$ to points in $\h$ is unbounded. Otherwise, $\h$ is {\bf shallow}. Halfspaces $\h_1$ and $\h_2$ are {\bf nested} if either $\h_1\subset\h_2$ or $\h_2\subset\h_1$. Note that if hyperplanes $\hh_1$ and $\hh_2$ are nested then they each have a halfspace which together form a nested pair. We also call $\h_1$ and $\h_2$ {\bf strongly separated} if $\hh_1$ and $\hh_2$ are strongly separated.

\subsection{Boundaries and Regular Points}

The collection of halfspaces of a CAT(0) cube complex $X$ forms a poset with partial order given by set-inclusion, and comes naturally equipped a complimentary map given by the involution $\h\to\h^*$. Let $\mathcal{H}_X^\circ$ denote the collection of ultrafilters of $\mathcal{H}_X$, that is, $\alpha\in\mathcal{H}_X^\circ$ satisfies the two conditions
\begin{enumerate}
    \item (Choice) for every pair $\h,\h^*\in\mathcal{H}_X$, exactly one of them is in $\alpha$, and

    \item (Consistency) if $\h_1\in\alpha$ and $\h_2\in\mathcal{H}_X$ with $\h_1\subset\h_2$, then $\h_2\in\alpha$.
\end{enumerate}
Given an ultrafilter $\alpha\in\mathcal{H}_X^\circ$, any subset $\{\h_i\}\subseteq\alpha$ is guaranteed to satisfy the so called {\bf partial choice condition}: for every pair $\h,\h^*$, at most one of them is in $\alpha$.
\medskip

A nested sequence of halfspaces $\{\h_i\}$ is called {\bf descending} (or a {\bf descending chain}) if $\h_{i+1}\subseteq\h_{i}$ for all $i$. A descending sequence of halfspaces is {\bf terminating} if $\h_j=\h_N$, for some $N$ and all $j>N$. Otherwise $\{\h_i\}$ is {\bf nonterminating}. Given $\sigma\in\mathcal{H}_X^\circ$, we say $\sigma$ satisfies the {\bf descending chain condition (DCC)} if every descending sequence in $\sigma$ is terminating. Note that descending sequences necessarily satisfy the partial choice condition.
\medskip

One has a bijective correspondence between the vertex set of $X$ and ultrafilters in $\mathcal{H}_X^\circ$ satisfying the DCC given by $\theta:X^0\to\mathcal{H}^\circ$, where $\theta(v)=\{\h\in\mathcal{H}_X\;|\;v\in\h\}$. Given a vertex $v\in X$, choosing the halfspace containing $v$ to call $\h$ for all hyperplanes $\hh$ gives a labeling of $\hat{\mathcal{H}}_X$, which we refer to as being determined by $v$.
\medskip

Now, let $\bf{2}:=\{0,1\}$. Then $\bf{2}^{\hat{\mathcal{H}}}$ is compact in the Tychonoff topology and the choice condition induces an inclusion $\mathcal{H}_X^\circ\hookrightarrow{\bf2}^{\hat{\mathcal{H}}}$ in which $\mathcal{H}_X^\circ$ embeds as a closed subset which is therefore compact. We call the image of $\mathcal{H}_X^\circ$ the {\bf Roller compactification} and denote it by $\bar{X}$. To realize the image of $X^0$ in $\bf{2}^{\hat{\mathcal{H}}}$ concretely, start by defining $\phi_{\hh}:X^0\to\bf{2}$ by
\[
    \phi_{\hh}(v)=\left\{\begin{array}{cc}
        1 & {\rm if}\;x\in\h \\
        0 & \;{\rm if}\;x\in\h^*
    \end{array}\right.
\]
Then, the map with coordinate functions $\{\phi_{\hh}\}_{\hh\in\hat{\mathcal{H}}}$ gives an embedding $X^0\hookrightarrow\bar{X}$ as a dense open subset. See \cite{Roller2016PocSM} and
\cite{NV_PoissonBdry} for details. Note that the Roller compactification behaves well with respect to products. Indeed, if $X=X_1\times\cdots\times X_n$ then $\bar{X}=\bar{X}_1\times\cdots\times\bar{X}_n$. 
\medskip
The {\bf Roller Boundary} of $X$, denoted $\partial_RX$, is the remainder space $\bar{X}\setminus X$. It is a fact that $\partial_RX$ agrees with the collection of ultrafilters in $\mathcal{H}_X^\circ$ that do not satisfy the DCC. Note that an element $\xi\in\partial_RX$ necessarily contains a nonterminating descending sequence. Of paramount interest to us is the case that $\xi$ contains a nonterminating descending sequence of strongly separated halfspaces, $\{\h_i\}$. We call such $\xi$ a {\bf regular point}. These were originally defined in \cite{FERNÓS_2018} (and independently in \cite{kar2016ping}) as elements $\xi\in\partial_RX$ such that if $\h_1,\h_2\in\xi$ then there exists $\mathfrak{k}\in\mathcal{H}_X$ with $\mathfrak{k}\subset\h_1\cap\h_2$ and $\mathfrak{k}$ is strongly separated from both $\h_1$ and $\h_2$. They were also shown to be equivalent to the definition provided. \cite[Proposition 5.13]{Fernos2016Random} and Lemma \ref{l:intersecting hyperspaces} below imply that, if $\xi$ is a regular point containing a nonterminating descending sequence of strongly separated halfspaces, $\{\h_i\}$, and if $\xi'$ is a regular point with $\{\h_i\} \subseteq\xi'$ then $\xi'=\xi$. Due to this fact, we will often refer to a regular point, $\xi$, simply as a nonterminating descending sequence of strongly separated halfspaces, $\{\h_i\} \subseteq\xi$.
\medskip

The {\bf visual boundary} of a CAT(0) cube complex $X$ is the collection of equivalence classes of (CAT(0)) geodesic rays, where two such rays are equivalent if they remain within a bounded Hausdorff distance of each other. We denote the visual boundary by $\partial_\infty X$. 

\subsection{Groups Acting on Cube Complexes} 

An element $g\in \Isom(X)$ is an {\bf automorphism} if it acts as a cubical isomorphism of $X$. We let $\aut(X)$ denote the group of automorphisms of $X$. 
A subgroup of $\Gamma$ in $\aut(X)$ is said to act {\bf cocompactly} if there exists a compact subcomplex $K\le X$ whose translates cover $X$, i.e., $X=\Gamma\cdot K$. Under our assumption that $X$ is locally compact, this is equivalent to requiring that $X/\Gamma$ be compact. $\Gamma$ acts {\bf essentially} if given any halfspace $\h$ in $X$, we have $d(\Gamma\cdot\hh,\hh)$ is unbounded.
Note that, for $\Gamma$ to act essentially, the halfspaces of $X$ must be deep.

\medskip

For $X$ a CAT(0) cube complex, upon passing to its barycentric subdivision, elements $g\in \aut(X)$ come in two types (\cite{Haglund2007IsometriesOC}):
\begin{enumerate}
    \item (Elliptic) $g$ fixes a vertex in $X$;
    
    \item (Hyperbolic) $g$ is not elliptic and preserves a geodesic line.
\end{enumerate}
\medskip

\noindent A geodesic line preserved by $g$ is called an {\bf axis} for $g$. A {\bf rank one isometry} is a hyperbolic automorphism $g$ none of whose axes bound a flat halfplane, and a {\bf contracting isometry} is a rank one isometry with axis $l$ such that the diameter of the orthogonal projection to $l$ of any ball disjoint from $l$ is bounded above. 
An element in $\aut(X)$ is \textbf{regular} if it acts as a contracting isometry on each irreducible factor of $X$.
\medskip
 
Contracting isometries can be detected by looking at their action on strongly separated hyperplanes. An element $g\in \aut(X)$ is said to \textbf{double skewer} the pair of hyperplanes $\hh',\hh''$ if there is a nested pair of halfspaces $\h'\subset\h''$ of $X$ such that $\h''\subsetneq g\h'$.

\begin{lemma}[\cite{Caprace_2011}]\label{contracting isom and hyperplanes}
    Let $X$ be a finite dimensional $\cat$ cube complex and $g\in \aut(X)$ double skewers a pair  of strongly separated hyperplanes in $X$. Then $g$ is a contracting isometry.
\end{lemma}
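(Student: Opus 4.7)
My proof plan is to iterate $g$ on the skewered pair $\hh', \hh''$ to produce an equivariant bi-infinite family of pairwise strongly separated hyperplanes lying along a combinatorial axis of $g$, and then deduce from this hyperplane structure that $g$ is contracting.

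First, I would apply $g$ repeatedly to the relation $\h' \subsetneq \h'' \subsetneq g\h'$ to obtain a strictly ascending bi-infinite chain of halfspaces
\[
\cdots \subsetneq g^{-1}\h' \subsetneq \h' \subsetneq g\h' \subsetneq g^{2}\h' \subsetneq \cdots,
\]
hence a bi-infinite family $\{g^{n}\hh'\}_{n \in \mathbb{Z}}$ of distinct hyperplanes. For any vertex $x \in \hh'$, each of $g\hh', \ldots, g^{n-1}\hh'$ separates $x$ from $g^{n}x$, so $d(x, g^{n}x) \geq n-1$, and thus $g$ has positive translation length. In particular $g$ is hyperbolic and, after barycentric subdivision if necessary, preserves a combinatorial axis $\ell$ on which $\langle g \rangle$ acts cocompactly by translation.

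Next, I would verify that the consecutive pairs $g^{n}\hh', g^{n+1}\hh'$ remain strongly separated. By $g$-equivariance it suffices to treat $n=0$: the hyperplane $\hh''$ lies strictly between $\hh'$ and $g\hh'$, so it separates them; and if some third hyperplane $\hat{\mathfrak{k}}$ were transverse to both $\hh'$ and $g\hh'$, it would have to cross $\hh''$ as well, giving $\hat{\mathfrak{k}}$ transverse to both $\hh'$ and $\hh''$, which contradicts the strong separation of that pair. Since the chain $\{g^{n}\h'\}$ exhausts the halfspace structure in both directions along the axis, for any basepoint $p \in \ell$ and all sufficiently large $|n|$ the hyperplane $\hh'$ separates $p$ from $g^{n}p$. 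Thus $\ell$ crosses $\hh'$ and hence, by $g$-equivariance, every $g^{n}\hh'$.

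Finally, I would translate this combinatorial structure into the metric projection bound that defines a contracting isometry. Given a ball $B$ disjoint from $\ell$, the nearest-point projection $\pi$ to $\ell$ respects sides of every hyperplane crossing $\ell$, so every $g^{n}\hh'$ whose crossing point with $\ell$ lies in the interior of $\pi(B)$ must itself be crossed by $B$; strong separation of consecutive $g^{n}\hh'$ then constrains how many such crossings $B$ can accommodate, since any hyperplane meeting two strongly separated ones would be transverse to both and hence to the hyperplane separating them. The main obstacle is this quantitative step: extracting from strong separation a uniform upper bound on the number of $g^{n}\hh'$ that a ball of a given radius can cross, and hence a uniform bound on $\mathrm{diam}(\pi(B))$ independent of $B$. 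This is precisely where strong separation (rather than mere separation) is indispensable, and it is the heart of why the double-skewering hypothesis yields contraction rather than only rank-one behavior.
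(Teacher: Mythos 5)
The paper itself offers no proof of this lemma (it is quoted from Caprace--Sageev), so your attempt stands on its own, and it has a genuine gap precisely where you flag ``the main obstacle.'' The preparatory steps are essentially sound: iterating the skewering relation gives the nested family $\{g^n\hh'\}$, your transversality argument correctly shows consecutive translates are strongly separated, and the separation count gives positive translation length, hence $g$ hyperbolic. The claim that the axis crosses every $g^n\hh'$ is true but under-justified: ``exhausts the halfspace structure'' should be replaced by the observation that only finitely many pairwise disjoint hyperplanes can separate two points of $X$, so $\bigcap_n g^{-n}\h'=\emptyset=\bigcap_n g^{n}\h'^{*}$, whence every point of the axis lies in some slab $g^{n+1}\h'\setminus g^{n}\h'$ and the axis must cross each translate. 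These are fixable.

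The last step, however, is where the lemma actually lives, and you do not prove it. Two specific problems: (i) the assertion that nearest-point projection to $\ell$ ``respects sides of every hyperplane crossing $\ell$,'' so that any $g^n\hh'$ whose crossing point lies in the interior of $\pi(B)$ must be crossed by $B$, is false as stated --- already in the standard squaring of $\mathbb{R}^2$, a point on the negative side of a vertical hyperplane can project to a point of a diagonal axis lying on the positive side; (ii) even a bound on how many of the $g^n\hh'$ a ball ``of a given radius'' can cross would only give a bound depending on that radius, whereas contraction requires $\mathrm{diam}\,\pi_\ell(B)$ to be bounded independently of the ball. The missing ingredient --- the real content of the Caprace--Sageev argument --- is a bottleneck property of strong separation, e.g.\ that if $\hh_1,\hh_2$ are strongly separated then the nearest-point projection of $\hh_2$ (indeed of the entire halfspace beyond it) onto $\hh_1$ is a single point; applied to consecutive members of $\{g^n\hh'\}$ this forces any convex set whose projection spans several consecutive slabs to come uniformly close to $\ell$, which is what yields the uniform projection bound. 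You also never address the rank-one requirement built into the paper's definition of contracting (no axis bounds a flat half-plane), though that too can be extracted from the strongly separated family, since a half-flat along $\ell$ would produce a hyperplane transverse to two strongly separated translates. As written, the heart of the lemma is described rather than proved.
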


Suppose $g\in \aut(X)$ double skewers $(\hh',\hh'')$.
Then for some choice of halfspace $\h'$ of the hyperplane $\hh'$,  $\{g^i\h'\}$ and $\{g^{-i}\h'^*\}$ give us two nonterminating descending sequence of halfspaces.
Moreover, if $\hh'$ and $\hh''$ are strongly separated hyperplanes, then both $\{g^i\h'\}$ and $\{g^{-i}\h'^*\}$ are nonterminating descending sequences of strongly separated halfspaces, and hence correspond to two regular points in $\partial_R(X)$.
In this case $g$ can be thought as acting along an axis whose end points are $\{g^i\h'\}$ and $\{g^{-i}\h'^*\}$.
We refer to $\{g^i\h'\}$ and $\{g^{-i}\h'^*\}$ as {\bf poles} of $g$.

\section{Girth Alternative for Lattices}\label{section:GAforlattices}

One crucial step towards proving Girth Alternative is to find a  sufficient condition for infinite girth.
Recall that free groups have infinite girth.
One classical way of producing a copy of free group on two generators as a subgroup in a given group $\Gamma$ is to use a version of the  \emph{ping-pong} lemma. 
This goes back to  Tits \cite{Tits1972FreeSI} where he proved the following.
\begin{proposition}[Free subgroup criterion]\label{free subgroup criterion}
    Let $\Gamma$ be a group acting on a set $X$. Suppose there exist elements $\sigma,\tau\in \Gamma$, subsets $U_\sigma,U_\tau\subset X$, and a point $x\in X$, such that 
    \begin{enumerate}
        \item $x\notin U_\sigma\cup U_\tau$
        \item $\sigma^k(\{x\}\cup U_\tau\}\subset U_\sigma$ for all $k\in \mathbb{Z}-\{0\}$
        \item $\tau^k(\{x\}\cup U_\sigma\}\subset U_\tau$ for all $k\in \mathbb{Z}-\{0\}$.    \end{enumerate}
        Then $\langle \sigma,\tau\rangle $ is non-abelian free subgroup of $\Gamma$.
\end{proposition}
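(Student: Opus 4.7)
The plan is to prove this by the classical \emph{ping-pong argument}: I will show that every non-trivial reduced word in the alphabet $\{\sigma^{\pm 1}, \tau^{\pm 1}\}$ represents a non-identity element of $\Gamma$. This is enough to conclude that $\langle \sigma, \tau \rangle$ is free of rank two, and in particular non-abelian.

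First I would fix a non-trivial reduced word $w = g_1 g_2 \cdots g_n$, where each $g_i$ is a non-zero power of $\sigma$ or of $\tau$, and consecutive letters alternate between powers of $\sigma$ and powers of $\tau$. I will track the orbit of the base point $x$ under the letters of $w$ applied right-to-left. Applying $g_n$ to $x$: by condition (2) or (3), depending on the type of $g_n$, one has $g_n x \in U_\sigma$ or $g_n x \in U_\tau$. Next, $g_{n-1}$ is of the opposite type from $g_n$, so the other of conditions (2) and (3) sends $g_{n-1} g_n x$ into the $U$-set corresponding to the type of $g_{n-1}$. Iterating, $wx = g_1 \cdots g_n x$ ends up in $U_\sigma \cup U_\tau$. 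Since $x \notin U_\sigma \cup U_\tau$ by condition (1), this forces $wx \neq x$, so $w$ is non-trivial in $\Gamma$.

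There is no real obstacle here --- the argument is essentially bookkeeping. The only point worth stressing is that the induction step only requires that \emph{consecutive} letters of the word be of opposite types, which is automatic because the word is reduced; the first and last letters $g_1$ and $g_n$ may or may not be of the same type, and this makes no difference. Once freeness has been verified, non-abelianness is immediate: both $\sigma$ and $\tau$ are non-trivial in $\Gamma$ (for instance $\sigma x \in U_\sigma$ while $x \notin U_\sigma$, so $\sigma \neq e$, and similarly for $\tau$), hence $\langle \sigma, \tau\rangle$ is free of rank exactly two and therefore non-abelian.
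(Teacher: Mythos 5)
Your argument is correct: it is the standard ping-pong-with-basepoint induction (tracking $g_i\cdots g_n x$ right-to-left and using that consecutive syllables alternate, so conditions (2) and (3) apply in turn), which shows every nontrivial reduced word moves $x$ into $U_\sigma\cup U_\tau$ and hence is nontrivial, so the homomorphism $F_2\to\Gamma$ sending the generators to $\sigma,\tau$ is injective and $\langle\sigma,\tau\rangle$ is free of rank two. The paper itself states this proposition without proof, attributing it to Tits, and your proof is exactly the argument being invoked; the only cosmetic remark is that your closing appeal to $\sigma\neq e$, $\tau\neq e$ is unnecessary, since injectivity of the map from $F_2$ already yields non-abelianness.
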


However, mere containment of a copy of $\mathbb{F}_2$ does not guarantee infinite girth.
In fact, there exist groups with finite girth that have free non-abelian subgroups \cite{AKHMEDOV2003198}.
Akhmedov \cite{akhmedov2005girth} observed that similar ping-pong arguments can be applied to certain classes of groups for which Tits Alternative holds to prove that a given group has infinite girth. 
Generalizing and reformulating the work of \cite{akhmedov2005girth}, Nakamura obtained the following criterion for infinite girth in comparable generality to Proposition \ref{free subgroup criterion}.

\begin{theorem}[Criteria for infinite girth \cite{nakamura2011girth}, \cite{nakamura2008some}]\label{t:criteria for infinite girth}
    Let $\Gamma$ be a group acting on a set $X$ with a finite generating set $S=\{\gamma_1,\ldots,\gamma_n\}$. Suppose there exist elements $\sigma,\tau\in \Gamma$, subsets $U_\sigma,U_\tau\subset X$, and a point $x\in X$ such that
    \begin{align}
       \label{Nakamura 1}&\;\;\;\; x\notin(U_\sigma\cup U_\tau)\cup\bigcup_{\varepsilon=\pm1}\bigcup_{i=1}^n\gamma_i^\varepsilon(U_\sigma\cup U_\tau)
       \\
       \label{Nakamura 2}&\;\;\;\;\sigma^k\left(\{x\}\cup U_\tau\cup\bigcup_{\varepsilon=\pm1}\bigcup_{i=1}^n\gamma_i^\varepsilon(U_\tau) \right)\subset U_\sigma\;\;for\;all\;k\in\mathbb{Z}-\{0\},\;and
       \\
       \label{Nakamura 3}&\;\;\;\;\tau^k\left(\{x\}\cup U_\sigma\cup\bigcup_{\varepsilon=\pm1}\bigcup_{i=1}^n\gamma_i^\varepsilon(U_\sigma)\right)\subset U_\tau\;\;for\;all\;k\in\mathbb{Z}-\{0\}.
    \end{align}
    Then $\Gamma$ is a non-cyclic group with $\Gir(\Gamma)=\infty$.
\end{theorem}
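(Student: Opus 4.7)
The plan is to prove $\Gir(\Gamma)=\infty$ by producing, for every integer $N\ge 1$, a finite generating set of $\Gamma$ whose Cayley graph has no non-trivial reduced loop of length at most $N$. As a preliminary remark, specializing hypotheses (\ref{Nakamura 2}) and (\ref{Nakamura 3}) to just $\{x\}$, $U_\sigma$ and $U_\tau$ gives exactly the data required by Proposition~\ref{free subgroup criterion} for $(\sigma,\tau)$, so $\langle\sigma,\tau\rangle$ is free of rank two; the extra content of hypotheses (\ref{Nakamura 1})--(\ref{Nakamura 3})---namely the sets $\gamma_i^{\pm 1}(U_\sigma\cup U_\tau)$---is precisely what will let one absorb additional $\gamma_i$'s into the ping-pong.

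Fix $N$ and choose positive integers $m_1,\ldots,m_n$ growing fast enough that no signed combination $\sum_{k=1}^{\ell}\pm m_{i_k}$ of at most $N$ of them, perturbed by at most $N$ stray $\pm 1$'s, can vanish (e.g.\ $m_1>N$ and $m_i>N\cdot m_{i-1}$ will do). I would take as candidate generating set
\[
S_N:=\{\sigma,\tau\}\cup\{g_i:=\sigma^{m_i}\gamma_i\tau^{m_i}\,:\,1\le i\le n\},
\]
which generates $\Gamma$ since $\gamma_i=\sigma^{-m_i}g_i\tau^{-m_i}$. For any reduced word $w=t_1t_2\cdots t_\ell$ of length $\ell\le N$ in $S_N$, I would expand each $t_j^{\pm 1}$ into its constituent $\sigma,\tau,\gamma_i$ factors and freely reduce to obtain $\widetilde w$. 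The exponent choice ensures $\widetilde w$ takes the form
\[
\widetilde w=u_r\,\gamma_{i_r}^{\varepsilon_r}\,u_{r-1}\,\gamma_{i_{r-1}}^{\varepsilon_{r-1}}\cdots\gamma_{i_1}^{\varepsilon_1}\,u_0,
\]
where every $u_j$ is a non-trivial power of $\sigma$ or of $\tau$, and any two consecutive $u_j,u_{j-1}$ not separated by a $\gamma$-letter alternate between $\sigma$ and $\tau$. Applying $\widetilde w$ to $x$ from the right, $u_0$ sends $x$ into $U_\sigma$ or $U_\tau$ by (\ref{Nakamura 2}) or (\ref{Nakamura 3}); each subsequent $\gamma_{i_j}^{\varepsilon_j}$ moves the orbit into $\gamma_{i_j}^{\varepsilon_j}(U_\sigma)$ or $\gamma_{i_j}^{\varepsilon_j}(U_\tau)$, which the next block $u_{j+1}$---again by (\ref{Nakamura 2}) or (\ref{Nakamura 3}) applied with the appropriate $U_\tau$ or $U_\sigma$---sends back into $U_\sigma$ or $U_\tau$. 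By induction $\widetilde w(x)\in U_\sigma\cup U_\tau\cup\bigcup_{\varepsilon=\pm 1}\bigcup_{i=1}^{n}\gamma_i^{\varepsilon}(U_\sigma\cup U_\tau)$, a set that excludes $x$ by (\ref{Nakamura 1}). Hence $w(x)\ne x$, so $w\ne 1$ in $\Gamma$, and therefore $\Gir(\Cay(\Gamma,S_N))>N$.

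The main obstacle is the combinatorial book-keeping in the expansion-and-reduction step: when two $g_i^{\pm 1}$'s are separated by only a few short $\sigma^{\pm 1},\tau^{\pm 1}$ letters of $w$, the $\tau^{\pm m_i}$- and $\sigma^{\pm m_j}$-tails can partially cancel in $\widetilde w$. The fast growth of the $m_i$'s is chosen precisely so that no such cancellation fully annihilates a $\sigma$- or $\tau$-block, which would otherwise glue two adjacent $\gamma$-letters together and break the inductive structure above. Once this control is in place, the invocations of (\ref{Nakamura 2}) and (\ref{Nakamura 3}) apply unconditionally and (\ref{Nakamura 1}) delivers the desired contradiction to $w(x)=x$. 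Since $N$ was arbitrary, we conclude $\Gir(\Gamma)=\infty$.
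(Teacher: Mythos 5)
The paper itself does not prove Theorem~\ref{t:criteria for infinite girth}; it is imported from Nakamura \cite{nakamura2011girth}, \cite{nakamura2008some}, so there is no internal proof to compare with. Your argument is essentially the standard Akhmedov--Nakamura construction (pad each old generator by large powers of the ping-pong elements, then run ping-pong based at $x$), and in outline it is correct: with $g_i=\sigma^{m_i}\gamma_i\tau^{m_i}$ and the $m_i$ large relative to $N$, every $\gamma_i$-letter surviving in the freely reduced expansion of a reduced $S_N$-word of length at most $N$ keeps a nonzero power of $\tau$ immediately to its right and a nonzero power of $\sigma$ immediately to its left (mirror statement for $\gamma_i^{-1}$), which is exactly the configuration governed by \eqref{Nakamura 2} and \eqref{Nakamura 3}; condition \eqref{Nakamura 1} then prevents the orbit of $x$ from returning to $x$, so girth of $\Cay(\Gamma,S_N)$ exceeds $N$, and your opening remark that $\langle\sigma,\tau\rangle$ is free of rank two gives the non-cyclicity claim.

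Two points in your sketch should be made explicit to close the book-keeping you flag. First, the numerical condition as literally stated cannot hold once repetitions of the $m_i$ are allowed (e.g.\ $m_i-m_i=0$); what you actually need is that each merged exponent has the form $\pm m_i\pm m_j+b$ with $|b|\le N$ (only two of the $m$'s can ever combine, because the $\gamma$-letters are never cancelled), and the only vanishing case, $i=j$ with opposite signs and $b=0$, would force a subword $g_ig_i^{-1}$ or $g_i^{-1}g_i$ of $w$, contradicting reducedness; so the growth condition must be used in tandem with reducedness of $w$. Second, the inductive step needs the block types to match the hypotheses: \eqref{Nakamura 2} controls $\sigma^k$ on $\gamma_i^{\pm1}(U_\tau)$ but says nothing about $\sigma^k$ on $\gamma_i^{\pm1}(U_\sigma)$, so you must check that each $\gamma_i$ is always entered from $U_\tau$ and exited by a $\sigma$-power, and dually for $\gamma_i^{-1}$; this is precisely what the asymmetric shape $\sigma^{m_i}\gamma_i\tau^{m_i}$ (as opposed to, say, $\sigma^{m_i}\gamma_i\sigma^{m_i}$) guarantees, together with the observation that the extreme syllables of the reduced expansion are always nonzero powers of $\sigma$ or $\tau$, never a bare $\gamma$-letter. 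With these details spelled out, your proof is complete and agrees in substance with the cited source.
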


Clearly, the elements $\sigma,\tau$ and subsets $U_\sigma,U_\tau$ in Theorem \ref{t:criteria for infinite girth} satisfy the conditions in Proposition \ref{free subgroup criterion} and therefore $\langle\sigma,\tau\rangle\le\Gamma$ must be a non-abelian free subgroup.
In general, properties \eqref{Nakamura 2} and \eqref{Nakamura 3} can be interpreted as analogous to the North-South dynamics exhibited by the action of a loxodromic element in a hyperbolic group on its boundary.
In the realm of irreducible $\cat$ cube complexes, the role of such isometries are played by contracting isometries. 
In general, a finite dimensional $\cat$ cube complex is a product of finitely many irreducible factors by Proposition \ref{p:product decomposition}.
Therefore, to get infinite girth of a group acting on a finite dimensional $\cat$ cube complex, we should look for isometries that act as contracting isometries when restricted to each irreducible factor. In other words, we are looking for regular elements.
\medskip

The following Theorem of \cite{Caprace_2011} says that regular elements always exist under certain conditions.

\begin{theorem}[Regular elements in lattices \cite{Caprace_2011}]\label{Regular elements in lattices}
    Let $X=X_1\times\ldots \times X_n$ be a product of irreducible, unbounded, locally compact
$\cat$ cube complexes such that $\aut(X_i)$ acts cocompactly and essentially on $X_i$ for each
$i$. Suppose that $\Gamma\le\aut(X)$ is a lattice (possible non-uniform).
Suppose that $\h_i\subset \mathfrak{k}_i$ are nested
halfspaces in each factor $X_i$. Then there exists a regular element $g\in \Gamma$ which simultaneously
double skewers these hyperplanes. That is to say, for each $i$, $g\mathfrak{k}_i \subsetneq \h_i$.  
\end{theorem}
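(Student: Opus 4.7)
The plan is a three-step argument: first replace the given nested pairs by strongly separated ones, then produce a simultaneous double-skewering element in $\aut(X)$, and finally transfer it into the lattice $\Gamma$ by a recurrence argument on the homogeneous space $\aut(X)/\Gamma$.

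First I would pass to a strongly separated inner pair in each factor. Since $\aut(X_i)$ acts essentially and cocompactly on the irreducible factor $X_i$, the Caprace-Sageev halfspace theory produces strongly separated hyperplanes arbitrarily deep in $X_i$. Using this, I may choose halfspaces $\h'_i \subsetneq \h_i$ and $\mathfrak{k}'_i \supsetneq \mathfrak{k}_i$ whose bounding hyperplanes are strongly separated. Any $g \in \aut(X)$ with $g\mathfrak{k}'_i \subsetneq \h'_i$ for every $i$ automatically satisfies $g\mathfrak{k}_i \subsetneq \h_i$, and by Lemma~\ref{contracting isom and hyperplanes} it acts as a contracting isometry on each factor and is therefore regular. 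So it is enough to prove the theorem assuming each prescribed pair is already strongly separated.

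Next, for each factor I would apply the Flipping Lemma (a consequence of the essential, cocompact action on an irreducible complex) to find $h_i \in \aut(X_i)$ with $h_i \mathfrak{k}_i \subsetneq \h_i$; in practice such an element is obtained by composing two flips of $\hh_i$. By Proposition~\ref{p:product decomposition}, $h := (h_1,\ldots,h_n)$ belongs to a finite-index subgroup of $\aut(X)$, giving a simultaneous double-skewerer in $\aut(X)$. Iterating the double-skewering relations shows $h^k \mathfrak{k}_i \subsetneq \h_i$ for all $k \geq 1$. To upgrade $h$ to an element of $\Gamma$, I consider
$$\Omega := \{\,g \in \aut(X) : g\mathfrak{k}_i \subsetneq \h_i \text{ for all } i\,\},$$
which is open in the compact-open topology on $\aut(X)$ and contains every $h^k$. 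I would then invoke Poincar\'e recurrence for the measure-preserving left $\aut(X)$-action on the finite-Haar-volume quotient $\aut(X)/\Gamma$: the trajectory $\{h^{-k} e\Gamma\}_{k \geq 0}$ must return infinitely often to any fixed open neighborhood of $e\Gamma$, yielding $k$ and $\gamma \in \Gamma$ with $h^{-k}\gamma$ arbitrarily close to the identity. Since $\Omega$ is open and contains $h^k$, choosing the neighborhood small enough forces $\gamma = h^k(h^{-k}\gamma) \in \Omega$, as desired.

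The main obstacle is the non-uniform case. When $\Gamma$ is non-uniform, $\aut(X)/\Gamma$ has cusps, and one must guarantee that the trajectory $\{h^{-k}e\Gamma\}$ does not escape into them before the recurrence can be harvested. I expect this to be the technical heart of the argument: likely ingredients are a Howe-Moore-style mixing statement for $\aut(X)$, or a hands-on non-escape analysis using the Roller boundary and the attracting/repelling poles of the regular element $h$. Reconciling this ergodic-theoretic input with the purely combinatorial halfspace condition defining $\Omega$ is what makes the final step delicate.
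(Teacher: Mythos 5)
This theorem is not proved in the paper at all: it is quoted verbatim from Caprace--Sageev \cite{Caprace_2011}, so the comparison below is with the proof in that reference. Your first two steps (sandwiching the given pair $\h_i\subset\mathfrak{k}_i$ by a strongly separated pair, so that double skewering the new pair forces both $g\mathfrak{k}_i\subsetneq\h_i$ and, via Lemma~\ref{contracting isom and hyperplanes}, regularity; then building a simultaneous double skewerer in $\aut(X_1)\times\cdots\times\aut(X_n)$) follow the same broad strategy as Caprace--Sageev, although the sandwiching needs more than ``strongly separated hyperplanes arbitrarily deep'': you need a strongly separated pair that straddles the whole slab between $\hh_i$ and $\hk_i$, which is obtained by chaining strongly separated pairs under a double-skewering element of $\aut(X_i)$, not just by pushing one pair deep into $\h_i$.

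The genuine gap is the last step, which is exactly the content the theorem is being cited for. First, Poincar\'e recurrence is an almost-everywhere statement; it does not guarantee that the specific point $e\Gamma$ returns to a neighborhood of itself, so ``the trajectory $\{h^{-k}e\Gamma\}$ must return'' is unjustified. Second, even granting a return, your quantifiers are circular: the neighborhood $V$ must be fixed before $k$ is produced, while the neighborhood $W$ with $h^kW\subseteq\Omega$ that you need depends on $k$; nothing in your argument breaks this circularity. Third, the machinery you propose to repair it (Howe--Moore mixing, non-escape into cusps) is neither available in this generality nor needed. The actual mechanism in \cite{Caprace_2011} is elementary: each set $U_i=\{g: g\mathfrak{k}_i\subsetneq\h_i\}$ is an \emph{open subsemigroup} of $\aut(X)$ (openness because the condition only depends on the image of one oriented edge dual to $\hk_i$; the semigroup property because $gg'\mathfrak{k}_i\subsetneq g\h_i\subseteq g\mathfrak{k}_i\subsetneq\h_i$), hence so is the finite intersection $\Omega$ of all the skewering sets involved. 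One then shows that a nonempty open subsemigroup meets every lattice: pick $u\in\Omega$ and a symmetric compact identity neighborhood $V$ with $VuV\subseteq\Omega$; the translates $u^k\pi(V)$ of the positive-measure set $\pi(V)\subset \aut(X)/\Gamma$ all have the same measure, so finiteness of the covolume forces two of them to overlap, producing $\gamma\in\Gamma\cap Vu^{k}V\subseteq (VuV)^{k}\subseteq\Omega$. This pigeonhole argument uses only finite covolume, works identically for non-uniform lattices, and sidesteps both the recurrence-of-a-point issue and the $k$-dependence of the neighborhood (alternatively, in this cubical setting one can take $V$ inside the pointwise stabilizer of the chosen dual edges, which makes $\Omega$ invariant under right and left multiplication by $V$ independently of $k$). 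Without some such device, your final step does not go through.
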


The above Theorem reduces the problem of finding a regular element in $\Gamma\le\aut(X)$ to finding a pair of strongly separated hyperplanes in each irreducible factor of $X$.
For our purpose, we will need two such regular elements  whose axes in each irreducible factor avoid certain points at infinity.
In particular, we want their axes to not intersect at infinity in each irreducible factor.
 According to Theorem \ref{Regular elements in lattices}, if we can find a facing quadruple of hyperplanes in each irreducible factor $X_i$, each  containing two pairs of strongly separated hyperplanes, then the group elements that skewer the strongly separated pairs in each $X_i$ will have the desired property. Our next Lemma says that, under certain conditions, we always have such a collection of facing hyperplanes.

\begin{Notation warning} For the rest of the paper, the notation $\{a_i,b_i\}$ will mean the set consisting of two elements $a_i$ and  $b_i$ whereas the notation $\{a_i\}$ will stand for a sequence $\{a_i\}_{i=1}^\infty$. 
 We will sometimes write sets with braces and at other times without braces, and we reserve parentheses for ordered sets.
 \end{Notation warning}

\begin{lemma}[Abundance of facing hyperplanes]\label{l:many facing hyperplanes}
    Let $X$ be an irreducible finite dimensional $\cat$ cube complex where $\aut(X)$ acts essentially and without fixed points at infinity. Let $\{\hA,\hB,\hC\}$ be a facing triple in $X$ with $\hA$ and $\hB$ strongly separated. Then for any $n$, there exists a collection of $2n$ facing hyperplanes $\{\hA_1=\hA,\hB_1=\hB,\hA_2,\hB_2,\ldots,\hA_n,\hB_n\}$ such that each pair $\{\hA_i,\hB_i\}$ is strongly separated for all $i$.
\end{lemma}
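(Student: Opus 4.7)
The plan is to proceed by induction on $n$, maintaining the stronger inductive invariant that at stage $n$ we have produced a facing $(2n{+}1)$-tuple $\{\hA_1,\hB_1,\ldots,\hA_n,\hB_n,\hC_n\}$ with each pair $(\hA_i,\hB_i)$ strongly separated and with the auxiliary halfspace nested as $\C_n\subset\C$. The base $n=1$ is just the given triple $(\hA_1,\hB_1,\hC_1)=(\hA,\hB,\hC)$, and at the end I simply discard $\hC_n$ to obtain the required $2n$ facing hyperplanes.

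For the inductive step, the idea is to translate the original facing triple $\{\hA,\hB,\hC\}$ strictly inside the current ``free'' halfspace $\C_n$ by a single automorphism of $X$. The key ingredient is the Flipping Lemma of Caprace--Sageev, a standard consequence of the hypotheses that $X$ is irreducible and that $\aut(X)$ acts essentially without fixed points at infinity: for every halfspace $\h$ of $X$ there exists $\phi\in\aut(X)$ with $\phi\h\subsetneq\h^*$. Applying this to $\h=\C_n^*$ yields $\phi_n\in\aut(X)$ with $\phi_n\C_n^*\subsetneq\C_n$. I then define $\hA_{n+1}:=\phi_n\hA$, $\hB_{n+1}:=\phi_n\hB$, and $\hC_{n+1}:=\phi_n\hC$.

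Verification of the invariant at stage $n+1$ is then essentially bookkeeping. Because $\C_n\subset\C$ by the invariant, each of $\A,\B,\C$ is disjoint from $\C_n$ and hence lies in $\C_n^*$; pushing forward by $\phi_n$ gives $\A_{n+1},\B_{n+1},\C_{n+1}\subset\phi_n\C_n^*\subsetneq\C_n$. This simultaneously preserves the nestedness $\C_{n+1}\subset\C$, makes the three new small halfspaces disjoint from every previous $\A_i,\B_i,\C_n$ (which all lie in $\C_n^*$ by the facing invariant at stage $n$), and inherits both the facing-triple structure and the strong separation of $(\hA_{n+1},\hB_{n+1})$ from $\{\hA,\hB,\hC\}$ by $\aut(X)$-equivariance. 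I expect the only real subtlety to be the invocation of the Flipping Lemma for the hyperplane $\hC_n$, which is not a priori part of any strongly separated pair; this is precisely the point at which the irreducibility of $X$ and the absence of fixed points at infinity for $\aut(X)$ become essential, since without them the flipping lemma can fail. Once that ingredient is in hand the rest of the argument is a clean inductive packaging in which the original triple $\{\hA,\hB,\hC\}$ carries a reservoir of free space into which successively deeper translates of the strongly separated pair $(\hA,\hB)$ are planted.
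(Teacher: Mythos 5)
Your overall strategy---repeatedly flipping a halfspace disjoint from everything built so far and planting a translated copy of the strongly separated pair inside it---is the right idea, and your use of the Flipping Lemma is exactly the paper's key tool. However, the inductive step has a genuine error in the construction of the new auxiliary halfspace $\C_{n+1}$. Your invariant asserts $\C_n\subset\C$, so $\C$ is \emph{not} disjoint from $\C_n$ (it contains it); the sentence ``each of $\A,\B,\C$ is disjoint from $\C_n$ and hence lies in $\C_n^*$'' is true for $\A$ and $\B$ but false for $\C$. Consequently $\C_{n+1}:=\phi_n\C$ does not land in $\phi_n\C_n^*\subsetneq\C_n$. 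In fact the opposite happens: from $\phi_n\C_n^*\subsetneq\C_n$ one gets $\C_n^*\subsetneq\phi_n\C_n\subseteq\phi_n\C=\C_{n+1}$, so your new ``reservoir'' contains $\C_n^*$ and hence contains every halfspace $\A_i,\B_i$ ($i\le n$) already constructed. The facing invariant therefore collapses at the very first step: $\C_2=\phi_1\C\supset\C^*\supset\A\cup\B$. What does survive of your step is the pair part: $\A_{n+1}=\phi_n\A$ and $\B_{n+1}=\phi_n\B$ do lie in $\phi_n\C_n^*$, are disjoint from everything previous, and form a strongly separated pair since automorphisms preserve strong separation. The sole---but fatal---problem is manufacturing the next reservoir; a single flip of $\C_n^*$ necessarily sends $\C_n$ (and anything containing it, such as $\C$) onto a halfspace containing $\C_n^*$, so no element of the original triple can serve as the source of $\C_{n+1}$.

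The repair is to draw the new reservoir from the family you have already built rather than from $\C$, and this is essentially what the paper does: it uses the most recently constructed strongly separated pair as the workspace and performs \emph{two} flips per step. Concretely, given $\{\A_1,\B_1,\ldots,\A_{n-1},\B_{n-1}\}$, it flips $\B_{n-1}^*$ to push the triple $(\A_1,\B_1,\A_{n-1})$ inside $\B_{n-1}$, obtaining $(\A_1',\B_1',\A_{n-1}')$, then flips $\A_{n-1}'^*$ to push $(\A_1',\B_1')$ inside $\A_{n-1}'$, obtaining $(\A_1'',\B_1'')$; the pair $(\A_{n-1},\B_{n-1})$ is discarded and replaced by the four new halfspaces, giving a net gain of one strongly separated pair while keeping $\A_1=\A$, $\B_1=\B$. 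Within your scheme one could instead set $\C_{n+1}:=\phi_n\A_n$ (the image of a previously built halfspace, which does lie in $\C_n^*$ and is disjoint from $\A_1,\B_1$), but this only works for $n\ge 2$, so the step from the initial triple still needs the two-flip trick---again matching the paper. One small side remark: the Flipping Lemma as used here requires only essentiality and the absence of fixed points at infinity; irreducibility of $X$ is not what makes it work.
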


An automorphism $g\in \aut(X)$ is said to \textbf{flip} a halfspace $\h$, if $g\cdot \h\subsetneq \h^*$. The main idea of the proof of the above lemma is to hit one pair of hyperplanes from the triple by an automorphism that flips a halfspace corresponding to the third hyperplane, and then keep repeating this process on the resulting hyperplanes. 
The following Theorem of \cite{Caprace_2011} says that any halfspace of $X$ can be flipped  under certain conditions.
\begin{theorem}[Flipping Lemma \cite{Caprace_2011}]\label{flipping lemma}
    Let $X$ be finite dimensional $\cat$ cube complex and $\Gamma\le\aut(X)$ be a group acting essentially without a fixed point at infinity. Then for any halfspace $\h$, there is some $\gamma\in \Gamma$ such that $\gamma\cdot \h\subsetneq \h^*$.
\end{theorem}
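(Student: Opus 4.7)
The plan is induction on $n$, implementing the hint by iteratively using the Flipping Lemma on a halfspace of the "third" hyperplane of a facing triple. To carry the induction cleanly I would strengthen the hypothesis: at stage $k$ we track the partial family $\{\hA_i, \hB_i\}_{i=1}^{k}$ together with an auxiliary facing triple $(\hA', \hB', \hC')$ (implicitly depending on $k$) with $\hA', \hB'$ strongly separated and whose three halfspaces $\A', \B', \C'$ are pairwise disjoint from each of the $2k$ halfspaces already constructed. The base case $k = 1$ takes $(\hA_1, \hB_1)=(\hA, \hB)$; to supply the initial auxiliary triple, we apply Theorem~\ref{flipping lemma} (and, if necessary, a second time, exactly as in the main step below) to deposit a translated copy of the original $(\hA, \hB, \hC)$ well inside $\C$.

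For the inductive step at stage $k$: apply Theorem~\ref{flipping lemma} to $(\C')^*$ to obtain $\gamma \in \aut(X)$ with $\gamma(\C')^* \subsetneq \C'$. Since $\A', \B' \subset (\C')^*$ by the facing property, we obtain $\gamma\A', \gamma\B' \subset \gamma(\C')^* \subsetneq \C'$. Setting $\hA_{k+1}:=\gamma\hA'$ and $\hB_{k+1}:=\gamma\hB'$ yields a strongly separated pair (the image of a strongly separated pair) whose halfspaces sit inside $\C'$, and are therefore disjoint from every earlier halfspace by the hypothesis on the auxiliary data. Thus $\{\hA_i, \hB_i\}_{i=1}^{k+1}$ is a facing family of $2(k+1)$ hyperplanes with strongly separated pairs.

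The hard part is refreshing the auxiliary triple at stage $k+1$: the naive candidate, the image triple $(\gamma\hA', \gamma\hB', \gamma\hC')$, fails because its third halfspace $\gamma\C'$ contains $(\C')^*$ and therefore overlaps with the previously constructed $\A_i, \B_i$'s. My proposed resolution is to invoke Theorem~\ref{flipping lemma} a second time, targeted at a halfspace chosen so that a fresh translated copy of the original triple lands on the $\gamma\C'$-side of $\gamma\hC'$ inside $\C'$ — a region automatically disjoint from $\A_{k+1}, \B_{k+1}$ (being separated from them by $\gamma\hC'$) as well as from all earlier halfspaces (being inside $\C'$). The richness of the $\aut(X)$-action guaranteed by essentiality and the absence of fixed points at infinity — the two inputs to Theorem~\ref{flipping lemma} — should furnish this repositioning at each stage, completing the induction and producing the desired $2n$ facing hyperplanes.
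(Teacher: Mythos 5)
The statement you were asked to address is the Flipping Lemma (Theorem~\ref{flipping lemma}), which in this paper is an imported result of Caprace--Sageev with no proof given here. Your proposal does not prove it: you invoke Theorem~\ref{flipping lemma} repeatedly as a black box and instead set out to derive a different statement, namely Lemma~\ref{l:many facing hyperplanes} on the abundance of facing hyperplanes. As an argument for the stated theorem this is circular, so at the very least you have answered the wrong question.

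Read instead as an attempt at Lemma~\ref{l:many facing hyperplanes}, your outline is close in spirit to the paper's --- both iterate the Flipping Lemma to push strongly separated pairs into regions disjoint from everything already built --- but the step you yourself flag as ``the hard part,'' refreshing the auxiliary triple, is a genuine gap. The Flipping Lemma only produces, for a prescribed halfspace $\h$, some $\gamma$ with $\gamma\h\subsetneq\h^*$; it does not let you ``deposit a translated copy of the original triple'' into a prescribed region such as $\C'\cap\gamma\C'$, and ``richness of the action'' is not an argument. The paper closes the induction without carrying any auxiliary triple: at stage $n$ it sacrifices the last pair, applying an automorphism flipping $\B_{n-1}^*$ to the triple $(\A_1,\B_1,\A_{n-1})$ to obtain $(\A'_1,\B'_1,\A'_{n-1})$ inside $\B_{n-1}$, and then an automorphism flipping $(\A'_{n-1})^*$ to the pair $(\A'_1,\B'_1)$ to obtain $(\A''_1,\B''_1)$ inside $\A'_{n-1}$. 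The two new pairs lie on opposite sides of $\hA'_{n-1}$ and both inside $\B_{n-1}$, hence are mutually disjoint and disjoint from $\A_1,\B_1,\ldots,\A_{n-2},\B_{n-2}$; replacing one old pair by two new strongly separated pairs yields $2n$ facing hyperplanes. In other words, the existing facing family already supplies the ``auxiliary triple'' at every stage, which is exactly the repositioning device your write-up is missing.
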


\begin{proof}[Proof of Lemma \ref{l:many facing hyperplanes}]
        It is convenient to prove the lemma in terms of the halfspaces.
   Suppose $(\A,\B,\C)$ is the facing triple of halfspaces.
    Let $\A_1:=\A$ and $\B_1:=\B$.
    We start by applying an automorphism to the pair $(\A_1,\B_1)$ that flips the halfspace $\C^*$ to get the pair $(\A_2,\B_2)$.
    Clearly the $\{\A_1,\B_1,\A_2,\B_2\}$ are facing halfspaces and the pair $\{\hA_2,\hB_2\}$ is strongly separated.
    By induction, suppose we have constructed the $2(n-1)$ facing halfspaces $\{\A_1,\B_1,\A_2,\B_2,\ldots,\A_{n-1},\B_{n-1}\}$ such that $\hA_i$ and $\hB_i$ are strongly separated for each $i$ where $n\geq 2$.
    To construct $2n$ facing hyperplanes, we apply an automorphism to the triple  $(\A_1,\B_1,\A_{n-1})$ that flips $\B^*_{n-1}$ to get $(\A'_1,\B'_1,\A'_{n-1})$ 
      and then apply another automorphism to the pair $(\A'_1,\B'_1)$ that flips $\A'^*_{n-1}$ to get another pair $(\A''_1,\B''_1)$ [See figure \ref{f:flipping hyperplanes}].
    By construction, $\{\A'_1,\B'_1\}$ and $\{\A''_1,\B''_1\}$ are both pairs of strongly separated hyperplanes.
    Finally, we claim that the collection $\{\A_1,\B_1,\ldots,\A_{n-2},\B_{n-2},\A'_1,\B'_1,\A''_1,\B''_1\}$ consists of facing halfspaces [Figure \ref{f:flipping hyperplanes}].
    \medskip

    First, we observe that halfspaces in the collection $\{\A_1',\B_1',\A''_2,\B''_2\}$ are mutually disjoint. This is because $\A_1'$ and $\B_1'$ are contained in $\A'^*_{n-1}$, and the pair $(\A''_1,\B''_1)$ is obtained by applying an automorphism to the pair $(\A'_1,\B'_1)$ that flips $\A'^*_{n-1}$. It follows that both $\A''_1$ and $\B''_1$ are contained inside $\A'_{n-1}$. The claim follows.
    \medskip
Next we show that both $\A'_1$ and $\B'_1$ are disjoint from the collection $\{\A_1,\B_1,\ldots,\A_{n-2},\newline \B_{n-2}\}$.
By construction, $(\A'_1,\B'_1)$ is obtained by applying an automorphism to the pair  $(\A_1,\B_1)$ that flips $\B^*_{n-1}$, therefore both $\A'_1$ and $\B'_1$ are  contained inside $\B_{n-1}$.
Since $\B_{n-1}$ is disjoint from any set from the collection $\{\A_1,\B_1,\ldots,\A_{n-2},\B_{n-2}\}$, so are both $\A'_1$ and $\B'_1$.
    \medskip

Similarly, we can show  both $\A''_1$ and $\B''_1$ are disjoint from the collection $\{\A_1,\B_1,\ldots, \newline \A_{n-2},\B_{n-2}\}$.   
    First note that, $\A'_{n-1}\subset \B_{n-1}$ by construction.
    Also, $\A''_1\subset \A'_{n-1}$ and $\B''_1\subset \A'_{n-1}$.
    Consequently both $\A''_1$ and $\B''_1$ are contained inside $\B_{n-1}$ which is disjoint from any set from the collection  $\{\A_1,\B_1,\ldots,\A_{n-2},\B_{n-2}\}$. This finishes the proof.   
   \end{proof}

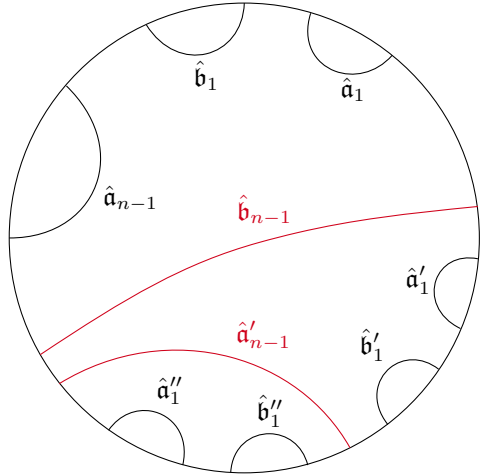
\begin{figure}[htp]\label{f:flipping hyperplanes}
    \centering
    \begin{tikzpicture}[x=0.55pt,y=0.55pt,yscale=-1,xscale=1]

\draw   (169,198.67) .. controls (169,109.38) and (241.38,37) .. (330.67,37) .. controls (419.95,37) and (492.33,109.38) .. (492.33,198.67) .. controls (492.33,287.95) and (419.95,360.33) .. (330.67,360.33) .. controls (241.38,360.33) and (169,287.95) .. (169,198.67) -- cycle ;
\draw [color={rgb, 255:red, 208; green, 2; blue, 27 }  ,draw opacity=1 ]   (190.33,279) .. controls (250.33,239) and (289.45,216.1) .. (340.31,201.92) .. controls (349.56,199.33) and (358.43,197.07) .. (367.11,195.05) .. controls (406.14,185.97) and (441.43,181.91) .. (491.33,177) ;
\draw [color={rgb, 255:red, 208; green, 2; blue, 27 }  ,draw opacity=1 ]   (203.33,299) .. controls (240.83,275.87) and (283.68,270.7) .. (321.4,280.74) .. controls (355.44,289.79) and (385.31,311.22) .. (403.33,343) ;
\draw    (321.33,360) .. controls (322.41,342.26) and (335.33,333.19) .. (348.29,333.41) .. controls (359.45,333.6) and (370.63,340.66) .. (374.33,355) ;
\draw    (237.33,331) .. controls (245.12,319.93) and (256.44,316.09) .. (266.54,317.82) .. controls (281.06,320.31) and (293.05,334.35) .. (288.33,355) ;
\draw    (479.33,261) .. controls (463.03,253.11) and (458.62,238.59) .. (462.61,227.46) .. controls (466.21,217.43) and (476.64,210.16) .. (491.33,213) ;
\draw    (429.33,327) .. controls (417.39,311.8) and (421.07,296) .. (431.09,287.94) .. controls (439.53,281.16) and (452.45,279.86) .. (464.33,289) ;
\draw    (169,198.67) .. controls (201.22,198.85) and (223.61,181.24) .. (229.98,157.9) .. controls (235.34,138.26) and (229.36,114.57) .. (208.33,94) ;
\draw    (263.33,52) .. controls (274.28,69.42) and (290.68,75.19) .. (304.55,71.92) .. controls (318.56,68.62) and (330,56.09) .. (330.67,37) ;
\draw    (376.33,44) .. controls (369.99,64.09) and (379.3,78.59) .. (393.03,83.81) .. controls (405.27,88.45) and (421.02,85.73) .. (432.33,73) ;

\draw (395.03,86.81) node [anchor=north west][inner sep=0.75pt]   [align=left] {$\displaystyle \hat{\mathfrak{a}}_{1}$};
\draw (304.55,74.92) node [anchor=north] [inner sep=0.75pt]   [align=left] {$\displaystyle \hat{\mathfrak{b}}_{1}$};
\draw (231.98,160.9) node [anchor=north west][inner sep=0.75pt]   [align=left] {$\displaystyle \hat{\mathfrak{a}}_{n-1}$};
\draw (365.11,192.05) node [anchor=south east] [inner sep=0.75pt]  [color={rgb, 255:red, 208; green, 2; blue, 27 }  ,opacity=1 ] [align=left] {$\displaystyle \hat{\mathfrak{b}}_{n-1}$};
\draw (323.4,277.74) node [anchor=south west] [inner sep=0.75pt]  [color={rgb, 255:red, 208; green, 2; blue, 27 }  ,opacity=1 ] [align=left] {$\displaystyle \hat{\mathfrak{a}} '_{n-1}$};
\draw (268.54,314.82) node [anchor=south west] [inner sep=0.75pt]   [align=left] {$\displaystyle \hat{\mathfrak{a}} ''_{1}$};
\draw (348.29,330.41) node [anchor=south] [inner sep=0.75pt]   [align=left] {$\displaystyle \hat{\mathfrak{b}} ''_{1}$};
\draw (460.61,227.46) node [anchor=east] [inner sep=0.75pt]   [align=left] {$\displaystyle \hat{\mathfrak{a}} '_{1}$};
\draw (429.09,284.94) node [anchor=south east] [inner sep=0.75pt]   [align=left] {$\displaystyle \hat{\mathfrak{b}} '_{1}$};
\end{tikzpicture}
\caption{The above picture shows how to increase the number of facing hyperplanes. The input is four facing hyperplanes $\{\hA_1,\hB_1,\hA_{n-1},\hB_{n-1}\}$ and the output is six facing hyperplanes $\{\hA_1,\hB_1,\hA'_1,\hB'_1, \hA''_1,\hB''_1\}$.}
\end{figure}

As mentioned before, we need to find two regular elements whose axis in each irreducible factor do not intersect each other at infinity.
More precisely, we want the poles of these two elements to be `disjoint' from each other in each factor.  
Recall that poles are defined in terms of descending sequences of half spaces.
We say, two descending sequences of half spaces $\{P_n\}$ and $\{Q_n\}$ are \textbf{disjoint} if $P_m\cap Q_m=\emptyset$ for some $m$.
Otherwise, we say that $\{P_n\}$ and $\{Q_n\}$  \textbf{intersects} each other.
The next lemma shows that a descending sequence of strongly separated halfspaces cannot intersect two disjoint descending sequence of halfspaces simultaneously.
The statement and its proof are similar to \cite[Lemma 5.11]{Fernos2016Random}, but somewhat different, so we include them for the convenience of the reader.

\begin{lemma}[Disjoint descending sequence of halfspaces]\label{l:intersecting hyperspaces}
    Let $\{\A_n\}$ and $\{\B_n\}$ be two disjoint descending sequence of halfspaces in a finite dimensional $\cat$ cube complex $X$.
    Let $\{\C_n\}$ be another descending sequence of strongly separated halfspaces in $X$ that intersects $\{\A_n\}$. Then $\{\C_n\}$ and $\{\B_n\}$ are disjoint.
\end{lemma}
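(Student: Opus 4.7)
The plan is to argue by contradiction: assume in addition that $\{\C_n\}$ intersects $\{\B_n\}$. Since $\{\A_n\}$ and $\{\B_n\}$ are disjoint, fix an index $m$ with $\A_m\cap\B_m=\emptyset$, equivalently $\B_m\subseteq\A_m^*$. Because all three sequences are descending, for every $n\geq m$ one has $\C_n\cap\A_m\supseteq\C_n\cap\A_n\neq\emptyset$ and, under the contradictory hypothesis, also $\C_n\cap\B_m\supseteq\C_n\cap\B_n\neq\emptyset$. So the two fixed disjoint halfspaces $\A_m$ and $\B_m$ both meet every deep $\C_n$.

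The workhorse of the argument is a consequence of the strong separation of $\{\C_n\}$: any single hyperplane $\hh$ is transverse to $\hC_n$ for at most one value of $n$. Indeed, if $\hh$ were transverse to both $\hC_{n_1}$ and $\hC_{n_2}$ with $n_1<n_2$, then $\hh$ meets $\C_{n_2}\subseteq\C_{n_1+1}$ and $\C_{n_1}^*\subseteq\C_{n_1+1}^*$, hence is also transverse to $\hC_{n_1+1}$; this contradicts the strong separation of the consecutive pair $\hC_{n_1},\hC_{n_1+1}$. Applying this claim to the two fixed hyperplanes $\hA_m$ and $\hB_m$, there is an index $N_0\geq m$ such that for every $n>N_0$ neither $\hA_m$ nor $\hB_m$ is transverse to $\hC_n$.

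For such $n$, because $\hA_m$ and $\hC_n$ are nested with $\A_m\cap\C_n\neq\emptyset$, one of $\A_m\subseteq\C_n$, $\C_n\subseteq\A_m$, or $\A_m^*\subseteq\C_n$ must hold. The middle option is excluded by $\B_m\subseteq\A_m^*$, which would force $\C_n\cap\B_m\subseteq\A_m\cap\A_m^*=\emptyset$, contradicting $\C_n\cap\B_m\neq\emptyset$. Moreover, the remaining dichotomy $\A_m\subseteq\C_n$ versus $\A_m^*\subseteq\C_n$ cannot flip as $n$ grows: if $\A_m\subseteq\C_n$ and $\A_m^*\subseteq\C_{n'}$ for some $n'>n>N_0$, then using $\C_{n'}\subseteq\C_n$ one obtains $X=\A_m\cup\A_m^*\subseteq\C_n$, contradicting that $\C_n$ is a proper halfspace.

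Passing to a tail we may therefore assume $\A_m\subseteq\C_n$ for all $n>N_0$ (the other case is symmetric). Pick any vertex $v\in\A_m$; then $v\in\C_n$ for every $n>N_0$, so the ultrafilter $\theta(v)\in\mathcal{H}_X^\circ$ contains the nonterminating descending sequence $\{\C_n\}_{n>N_0}$, violating the descending chain condition that $\theta(v)$ satisfies since $v$ is a vertex of $X$. This is the desired contradiction. The step I expect to be the main obstacle is the transversality claim in the second paragraph, which is where the global hypothesis of strong separation gets converted into a combinatorial constraint; once that is in hand, the remaining case analysis is essentially bookkeeping together with the DCC characterization of vertex ultrafilters.
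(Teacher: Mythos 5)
Your argument is correct, but it takes a genuinely different route from the paper's. The paper argues directly: fixing $N$ with $\A_N\cap\B_N=\emptyset$, it shows that $\C_M\subset\A_N$ actually holds for suitable $M$, by ruling out $\C_M^*\subset\A_N$ via finite dimensionality (only finitely many hyperplanes can lie between two nested halfspaces) and ruling out $\hC_M\pitchfork\hA_N$ via strong separation of the consecutive pair $\C_M,\C_{M+1}$; disjointness from $\{\B_n\}$ is then immediate. You instead argue by contradiction and, after the same use of strong separation (your ``at most one transversality'' claim is a packaged form of the paper's step, and its proof via crossing the intermediate hyperplane $\hC_{n_1+1}$ is sound), you exclude precisely the containment $\C_n\subseteq\A_m$ using the contradiction hypothesis, and then get the final absurdity from the DCC characterization of vertex ultrafilters: a fixed vertex $v$ of $\A_m$ (or of $\A_m^*$) would lie in every $\C_n$, so $\theta(v)$ would contain the nonterminating descending chain $\{\C_n\}$, which is impossible. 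What your route buys is that finite dimensionality of $X$ is never actually used, and you avoid the tersest step of the paper's proof (the count of hyperplanes between two halfspaces); what the paper's route buys is brevity and the explicit conclusion $\C_M\subset\A_N$. One cosmetic slip: with the paper's convention that halfspaces are the \emph{closed} halves, $\A_m\cap\A_m^*$ equals the hyperplane $\hA_m$, not the empty set, so in excluding the middle case you should write $\C_n\cap\B_m\subseteq\A_m\cap\B_m=\emptyset$, which is exactly the disjointness hypothesis and yields the same contradiction.
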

\begin{proof}
 We need to show that $\B_n\cap \C_n=\emptyset$ for sufficiently large $n$.
    By hypothesis, there exist $N$ such that $\A_i\cap \B_i=\emptyset $ for all $i\geq N$.
    Since $\{\C_n\}$ is descending and intersects $\{\A_n\}$, there exists $M$ such that  $\A_N\cap \C_M\neq \emptyset$.
    So, either  $\C_M\subset \A_N$, or $\C^*_M\subset \A_N$ or $\hC_M\pitchfork \hA_N$. 
    \medskip
    
    Next, we will show how to rule out the second and third cases by taking $M$ large enough.
    To rule out the second case, we observe that
    $\C^*_M\subset \C^*_{M+k}\subset \A_N$ for all $k\geq 0$.
    However, since $X$ is finite dimensional, there can be at most finitely many hyperplanes between any two halfspaces.
    So, for large enough $M$, the second case does not occur.
    To rule out the third case, suppose $\hC_M\pitchfork \hA_N$ for some $M$.
    Since $\C_M$ and $\C_{M+1}$ are strongly separated, $\A_N$ cannot be transverse to $\C_{M+1}$.
    So, either $\A_N\cap \C_{M+1}=\emptyset$ or $\C_{M+1}\subset \A_N$.
    By hypothesis $\A_N\cap \C_{M+1}\neq \emptyset$. Therefore, we have $\C_{M+1}\subset \A_N$.
    \medskip
    
    So we are left with the first possibility which is $\C_M\subset \A_N$ for some $M$. Since $\A_N\cap \B_N=\emptyset$, it follows that $\C_{M}\cap \B_N=\emptyset$. Since the sequences of halfspaces are descending, we get $\{\C_n\}$ is disjoint from $\{\B_n\}$.
\end{proof}

We say that a $\cat$ cube complex $X$ is $\textbf{$\R$-like}$ if there is an $\aut(X)$-invariant geodesic line in $X$. The final ingredient to our main theorem is the following Lemma which characterizes cube complexes with invariant Euclidean flats.

\begin{lemma}[\cite{Caprace_2011}]\label{invariant flat}  Let $X$ be a finite dimensional $\cat$ cube complex such that $\aut(X)$ acts essentially. Then $\aut(X)$ stabilizes some $n$-dimensional flat $\R^n\subset X$ if and only if $X$ decomposes as a product $X=X_1\times X_2\times\ldots \times X_n$ of subcomplexes, each of which is essential and $\R$-like.
\end{lemma}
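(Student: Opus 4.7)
The plan is to prove the two implications separately. For the ``if'' direction, suppose $X = X_1 \times \cdots \times X_n$ with each $X_i$ essential and $\R$-like, and pick an $\aut(X_i)$-invariant geodesic line $\ell_i \subset X_i$ for each $i$. Then $F := \ell_1 \times \cdots \times \ell_n$ is an $n$-dimensional Euclidean flat, and by Proposition~\ref{p:product decomposition} the finite-index subgroup $\prod_i \aut(X_i) \leq \aut(X)$ clearly stabilizes $F$. The remaining cosets of $\aut(X)$ permute isomorphic factors; since any such permutation sends invariant lines in one factor to invariant lines in an isomorphic factor, possibly after replacing $F$ by the finite union of product flats over all orbits of the factor permutation, the full $\aut(X)$ stabilizes an $n$-dimensional flat.

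For the converse, suppose $\aut(X)$ stabilizes an $n$-dimensional flat $F \cong \R^n$. I would first invoke Proposition~\ref{p:product decomposition} to decompose $X = Y_1 \times \cdots \times Y_p$ into irreducible factors, so that the finite-index subgroup $\prod_j \aut(Y_j)$ still stabilizes $F$. Next I would show that $F$ itself splits as $F = F_1 \times \cdots \times F_p$ with $F_j \subseteq Y_j$ a convex Euclidean subflat: the factor-wise action of $\prod_j \aut(Y_j)$ forces each projection of $F$ to be an $\aut(Y_j)$-invariant convex subset of $Y_j$, and a direct convexity argument (aided by essentiality to rule out ``diagonal'' invariant flats) then yields the factorization. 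I would then argue that in each irreducible $Y_j$, the flat $F_j$ has dimension exactly one: if $\dim F_j \geq 2$, two independent translation directions would promote, via a hyperplane-combinatorial analysis, to two families of pairwise transverse $\aut(Y_j)$-invariant hyperplane orbits, and a standard splitting criterion for cube complexes would contradict the irreducibility of $Y_j$; if $\dim F_j = 0$, a global $\aut(Y_j)$-fixed vertex would bound the translates of any halfspace containing it, contradicting essentiality together with unboundedness of $Y_j$. Summing dimensions forces $p = n$, and each $Y_j$ is $\R$-like via the invariant line $F_j$.

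The main obstacle I anticipate is ruling out a $2$-dimensional invariant flat in an irreducible essential cube complex: the geometric data of two commuting translation directions must be converted into combinatorial data about mutually transverse $\aut$-invariant hyperplane systems, and then a splitting theorem must be invoked to produce a contradictory nontrivial factorization of $Y_j$. A secondary subtlety is justifying that $F$ itself respects the irreducible product decomposition of $X$; this is essentially a convex-geometry fact about CAT(0) products but still relies on essentiality to preclude diagonal-type invariant flats. Once these two points are in hand, the rest of the argument is bookkeeping: matching the dimension of the flat to the number of irreducible $\R$-like factors.
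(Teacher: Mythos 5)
Note first that the paper does not prove this lemma at all: it is quoted verbatim from Caprace--Sageev \cite{Caprace_2011}, so your proposal has to be measured against their argument. Your overall architecture (reduce to irreducible factors, show each factor contributes a line) is reasonable, but the mathematical heart of the statement is exactly the step you leave as a black box: that an irreducible essential factor $Y_j$ cannot carry an $\aut(Y_j)$-invariant flat of dimension at least two. Your proposed mechanism for this -- ``two independent translation directions'' promoting to two transverse invariant hyperplane families -- already starts from data you do not have: stabilizing a flat gives no translations in the group whatsoever ($\aut(Y_j)$ could act on the flat through any subgroup of its isometry group, e.g.\ with a fixed point in the flat), so there are no ``commuting translation directions'' to analyze. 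The actual route in \cite{Caprace_2011} is different: essentiality forces \emph{every} hyperplane of $X$ to cross the invariant flat $F$ (otherwise $F$ sits in one halfspace $\h$, while the essential orbit of $\hh$ pushes $\h$ arbitrarily deep into $\h^*$, contradicting invariance of $F$); the traces of hyperplanes on $F\cong\mathbb{R}^n$ then sort into $n$ parallel families, distinct families are pairwise transverse, and the standard product-decomposition criterion splits $X$ into $n$ factors, one per family, each of which is $\R$-like via the corresponding coordinate line of $F$. In particular the number $n$ of factors comes out of this combinatorial sorting, not out of first splitting $X$ into irreducibles; your intermediate claim that $F$ itself factors as $F_1\times\cdots\times F_p$ over the irreducible factors is also only gestured at (the needed point is that a ``diagonal'' flat invariant under the product subgroup $\prod_j\aut(Y_j)$ would force some $\aut(Y_j)$ to fix its projection pointwise, which essentiality forbids), and essentiality of each resulting factor action is never verified.

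There is also a genuine gap in your ``if'' direction. Replacing $F=\ell_1\times\cdots\times\ell_n$ by ``the finite union of product flats over all orbits of the factor permutation'' does not prove the statement: a finite union of flats is not a flat, and the lemma asks for a single $\aut(X)$-invariant copy of $\mathbb{R}^n$. The fix is to make the line in each factor canonical: in an essential $\R$-like complex any two invariant geodesic lines have bounded distance (a convex invariant function on a line with unbounded group orbits is constant), hence are parallel by the flat strip theorem, so the union of all invariant lines is $\ell\times Z$ with $Z$ bounded convex, and the line through the circumcenter of $Z$ is preserved by every isomorphism between factors. With that canonical choice the product flat is honestly $\aut(X)$-invariant, including under automorphisms permuting isomorphic factors. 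As written, both directions of your argument have holes at precisely the points where the real work happens.
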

We are now ready to prove our main theorem which appears as Theorem~\ref{intro:v1} in the introduction.

\begin{theorem}[Girth Alternative for lattices, version 1]\label{premain theorem}
     Let $X = X_1 \times \dots \times X_n$ be a product of $n$ irreducible, unbounded, locally compact $\cat$ cube complexes such that $\aut(X_i)$ acts cocompactly and essentially on $X_i$ for all $i$. 
\medskip

Then for any (possibly non-uniform) lattice $\Gamma \leq \aut(X)$, 
either $\Gamma$ virtually fixes some point in $X\cup \bdry X$ or 
$\Gir(\Gamma)=\infty$.
\end{theorem}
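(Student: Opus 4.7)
The plan is to verify Nakamura's criterion (Theorem~\ref{t:criteria for infinite girth}) with $\sigma,\tau\in\Gamma$ a pair of carefully chosen regular elements playing ping-pong on the Roller compactification $\bar{X}$. Fix a finite generating set $S=\{\gamma_1,\ldots,\gamma_m\}$ of $\Gamma$. Assuming $\Gamma$ does not virtually fix a point in $X\cup\bdry X$, I would first observe that each $\aut(X_i)$ must act without a fixed point at infinity on $X_i$: any such fixed point would, via the finite-index product embedding of Proposition~\ref{p:product decomposition}, produce a point in $\bdry X$ virtually fixed by $\Gamma$. Combined with the essential-cocompact hypothesis on each $X_i$, this allows one to extract a facing triple with one strongly separated pair in each irreducible factor $X_i$, using the Flipping Lemma (Theorem~\ref{flipping lemma}) together with the standard existence of strongly separated hyperplanes in irreducible essential actions. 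Lemma~\ref{l:many facing hyperplanes} then upgrades this to $2N$ mutually facing hyperplanes $\{\hA_{i,k},\hB_{i,k}\}_{k=1}^{N}$ in each factor, with each pair strongly separated, for any $N\geq 1$.

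Next, for each $k\in\{1,\ldots,N\}$, I would feed the nested pairs of strongly separated halfspaces bounded by $\{\hA_{i,k},\hB_{i,k}\}$ across all factors into Theorem~\ref{Regular elements in lattices} to obtain a regular element $g_k\in\Gamma$ simultaneously double-skewering these hyperplanes in every factor. The two poles of $g_k$ in each factor $X_i$ lie inside the designated halfspaces, so by Lemma~\ref{l:intersecting hyperspaces} and the disjointness of the hyperplane collection, distinct $g_k,g_{k'}$ have disjoint poles in each factor and hence in $\bar{X}$. Taking $U_{g_k}\subset\bar{X}$ to be a union of two small neighborhoods of the poles of $g_k$, defined by sufficiently deep halfspaces in the descending chains representing each pole, the North-South dynamics of regular elements ensure that high powers $g_k^{\pm\ell}$ contract any set avoiding the repelling pole-neighborhood into any prescribed neighborhood of the attracting pole.

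The heart of the argument is then to select $\sigma,\tau$ from $\{g_1,\ldots,g_N\}$ so that, beyond the disjointness of their own poles, the translates $\gamma_i^{\pm 1}(U_\tau)$ (respectively $\gamma_i^{\pm 1}(U_\sigma)$) avoid the poles of $\sigma$ (resp.\ $\tau$) for every generator $\gamma_i\in S$. Since the translated poles $\gamma_i^{\pm 1}g_{k'}^{\pm}$ are individual points of $\bar{X}$ and the pole-neighborhoods $U_{g_k}$ can be shrunk at will by going deeper in the halfspace chains, each generator excludes only finitely many candidates from our list. Taking $N\gg |S|$ and pigeonholing (again leveraging Lemma~\ref{l:intersecting hyperspaces}) furnishes two indices $k\neq k'$ satisfying all required disjointness conditions; setting $\sigma=g_k$, $\tau=g_{k'}$, $U_\sigma=U_{g_k}$, $U_\tau=U_{g_{k'}}$, and choosing a reference vertex $x\in X$ outside the finite union $U_\sigma\cup U_\tau\cup\bigcup_{i,\varepsilon}\gamma_i^\varepsilon(U_\sigma\cup U_\tau)$ secures condition~\eqref{Nakamura 1}, while~\eqref{Nakamura 2} and~\eqref{Nakamura 3} follow from North-South dynamics. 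The main obstacle is this selection step: one must choose the halfspace depths defining $U_\sigma,U_\tau$ and the indices $k,k'$ in a coordinated way so that the resulting neighborhoods are small enough to satisfy all finitely many disjointness conditions induced by $S$, yet the dynamics of $\sigma,\tau$ contract onto them rapidly enough to close out Nakamura's ping-pong in every factor simultaneously.
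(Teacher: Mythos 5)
Your overall skeleton is the same as the paper's: Nakamura's criterion (Theorem~\ref{t:criteria for infinite girth}), regular double-skewering elements from Theorem~\ref{Regular elements in lattices}, an abundance of facing strongly separated pairs from Lemma~\ref{l:many facing hyperplanes}, and disjointness bookkeeping via Lemma~\ref{l:intersecting hyperspaces}. The selection step you flag as the ``main obstacle'' is in fact unproblematic and is essentially what the paper does: fix $\gs$ first, form the finite list of generator-translates of its poles, and use Lemma~\ref{l:intersecting hyperspaces} together with the infinitely many facing strongly separated pairs to choose $\tau$ avoiding that list; moreover only one of your two families of disjointness conditions is needed, since $\gamma_j(U_{\gs})\cap U_{\tau}=\emptyset$ if and only if $\gamma_j^{-1}(U_{\tau})\cap U_{\gs}=\emptyset$.

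The genuine gap is where you run the ping-pong: on the product $\bar{X}=\bar{X}_1\times\cdots\times\bar{X}_n$ with ``small neighborhoods of the poles'' and an appeal to North--South dynamics of regular elements. For $n\ge 2$ this does not work as stated, and the difficulty is not cosmetic. If $U_{g_k}$ is a product-type neighborhood of the two fixed points of $g_k$ in $\bar{X}$, then a point whose $i$-th coordinate equals the repelling pole of $g_k$ in $X_i$ but whose other coordinates are generic avoids $U_{g_k}$, yet no power of $g_k$ moves it into a neighborhood of the attracting point; so the contraction statement you invoke is false in the reducible setting. If instead you define $U_{g_k}$ as the union over factors of (pullbacks of) deep halfspaces $g_k^{N}\h_i\cup g_k^{-N}\h_i^{*}$ --- which is exactly what makes the contraction valid, directly from double skewering rather than from any boundary dynamics --- then the sets are so large that the disjointness required by conditions~\eqref{Nakamura 1}--\eqref{Nakamura 3} fails: a point deep in $\gs$'s halfspace in the first factor and deep in $\tau$'s halfspace in the second lies in $U_{\gs}\cap U_{\tau}$. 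The paper resolves precisely this tension by abandoning the product altogether and playing ping-pong on the disjoint union $Y=\sqcup_{i=1}^{n}X_i$ (Nakamura's criterion only requires an action on a set), with $U_{\gs'}=\bigcup_i(\gs^{N}\h_i\cup\gs^{-N}\h_i^{*})$ and $U_{\tau'}$ defined analogously: there the complement of $U_{\gs'}$ is the disjoint union of slabs, each mapped into the corresponding pair of halfspaces by a high power of $\gs$, and the disjointness conditions are checked factor-by-factor. To repair your argument you should either adopt this disjoint-union device or prove a precise product-adapted dynamics statement; as written, the step ``North--South dynamics ensure \dots in every factor simultaneously'' is the missing idea. (A minor additional point: the existence of a facing triple in each $X_i$ is not just the Flipping Lemma; one must rule out the $\R$-like/invariant-flat case, which the paper does via Lemma~\ref{invariant flat}, and which your fixed-point reduction can absorb but should be said explicitly.)
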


\begin{proof}
    Suppose $\Gamma\le\aut(X)$ does not fix a point in $X\cup\bdry X$. We want to show that $\Gir(\Gamma)=\infty$.
    \medskip
    
    Since $\Gamma$ does not have fixed points at infinity, $\aut(X)$ does not have fixed points at infinity. Furthermore, since $\aut(X)$ acts essentially on $X$, we can invoke \cite[Proposition 5.1]{Caprace_2011} to obtain a pair of strongly separated hyperplane $\{\hh_i,\hk_i\}$ in each $X_i$.
    \medskip

    We now apply Theorem \ref{Regular elements in lattices} to obtain an element $\gs\in \Gamma$ that double skewers $(\hh_i,\hk_i)$ in each $X_i$.  
    Let $\h_i$ be the halfspace corresponding to the hyperplane $\hh_i$ such that $\gs^{j+1}\h_i\subset \gs^j\h_i$ for any $j\in \mathbb{N}$.
    Since $(\hh_i,\hk_i)$ are strongly separated hyperplanes,
    it follows that $\{\gs^n\h_i\}$ and $\{\gs^{-n}\h^*_i\}$ are two disjoint descending sequence of strongly separated halfspaces.
    In other words, they are the poles of $\gs$ in $X_i$ (see preliminaries).
    \medskip

    Next, we choose an arbitrary generating set $\{\gamma_1,\gamma_2,\ldots, \gamma_k\}$ of $\Gamma$.
   We will consider the translate of poles of $\gs$ in each $X_i$ under this generating set.
   More precisely, we consider the following list of descending sequences of  halfspaces:
        \begin{equation*}\label{list of poles}
        \{\gs^n\h_i\}, \{\gs^{-n}\h_i^*\}, \{\gamma_j\gs^n\h_i\}, \{\gamma_j\gs^{-n}\h^*_i\}, \{\gamma_j^{-1}\gs^n\h_i\}, \{\gamma_j^{-1}\gs^{-n}\h_i^*\} \tag{$\dagger$}
        \end{equation*}
        where $1\leq i\leq n$ and $1\leq j\leq k$.
Since isometries preserve strong separation, all the sequences in the above list consists of strongly separated halfspaces.
        
        (Also note that, for a fix $i$, all the halfspaces in the above list may not live in $X_i$, because $\gamma_j$ can switch isomorphic factors. However, they all live in some $X_i$). 
    \medskip
    
    Now, we are going to produce another regular element $\tau\in \Gamma$ such that the poles in each $X_i$ are disjoint from the list (\ref{list of poles}). In particular, its poles will be disjoint from that of $\gs$.
    This is where we are going to use the fact that $\Gamma$ does not fix a point in $X\cup\bdry X$.
    This will ensure that each $X_i$ is large, in a certain sense. More precisely, we will see that each $X_i$ has a facing triple.
    
    First we observe that, since each $X_i$ is finite dimensional and locally compact and $\aut(X_i)$ acts essentially and cocompactly on $X_i$, by \cite[Theorem 7.2]{Caprace_2011}, either $\aut(X_i)$ stabilizes some Euclidean flat or there is a facing triple of hyperplanes in $X_i$.

     Suppose $\aut(X_i)$ stabilizes some Euclidean flat $F_i\subset X_i$. 
     Since $X_i$ is irreducible, it follows from Lemma~\ref{invariant flat} that $F_i$ is $\textbf{R}$-like.
     Thus $\aut(X_i)$ has an index two subgroup which fixes a point at infinity contradicting our assumption on $\Gamma$.
     Therefore, we have a facing triple of hyperplanes in each $X_i$.
    Moreover, we can choose this facing triple in a way so that there is a pair of hyperplanes in the triple that are strongly separated (see the proof of \cite[Theorem 7.2]{Caprace_2011} for details).

    By Lemma \ref{l:many facing hyperplanes}, there exist infinitely many pairs of strongly separated hyperplanes in each $X_i$ that are facing each other.
    Given a pair of strongly separated hyperplanes taken from each $X_i$, we can choose an element from $\Gamma$ that double skewers each pair in the corresponding $X_i$ due to Theorem \ref{Regular elements in lattices}.
    Since all the pairs are facing each other, the poles of these group elements give us infinitely many mutually disjoint descending sequence of strongly separated hyperplanes in $X_i$. 
Of these infinitely many descending sequences of halfspaces in $X_i$, all but finitely many are disjoint from the list (\ref{list of poles})  by Lemma \ref{l:intersecting hyperspaces} because the list (\ref{list of poles}) contains a finite number of descending sequences of strongly separated halfspaces.
In particular, there exists a pair of disjoint halfspaces $\{\h'_i,\mathfrak{d}'_i\}$ in each $X_i$, such that for large enough $N$, both $\h'_i$ and $\mathfrak{d}'_i$ are disjoint from the following collection 

\[
\bigl\{\gs^{N}\h_i, \gs^{-N}\h_i,\gamma_j\gs^{N}\h_i^*, \gamma_j\gs^{-N}\h^*_i, \gamma_j^{-1}\gs^N\h_i, \gamma_j^{-1}\gs^{-N}\h_i^*\mid 1\leq i\leq n,1\leq j\leq k\bigr\}.
\]

Let $\tau\in \Gamma$ be an element that double skewers $(\hh'_i,\hk'_i)$ in each $X_i$.
Without loss of generality, let us assume that $\{\tau^n\h'_i\}$ and $\{\tau^{-n}\h'^*_i\}$ are the poles of $\tau$ in $X_i$.
\medskip

We let $Y:=\sqcup_{i=1}^n X_i$. Note that $\Gamma$ acts on $Y$. 
We set $U_{\gs'}:=\bigcup_{i} (\gs^N\h_i\cup \gs^{-N}\h_i^*)$ and $U_{\tau'}:=\bigcup_{i}(\tau^N\h'_i\cup \tau^{-N}\h'^*_i)$.
\medskip

Note that, the complement of $\gs^N\h_i\cup \gs^{-N}\h_i^*$ in $X_i$ get sent to $\gs^N\h_i\cup \gs^{-N}\h_i^*$ under high enough power of $\gs^{\pm 1}$.
Taking union over $i$, we obtain that complement of $U_\gs$ in $Y$ get sent to $U_\gs$ under high enough power of $\gs^{\pm 1}$. Similarly, complement of $U_\tau$ in $Y$ get sent to $U_\tau$ under high enough power of $\tau^{\pm 1}$. 
We set $\gs'=\gs^M$, $\tau'=\tau^M$, 
where we choose $M$  so large that for all $i\in \mathbb{Z}-\{0\}$, we have 
\begin{equation}\label{gs}
      (\gs')^i(Y-U_{\gs'})\subset U_{\gs'} \quad \text{    and    }\quad 
     (\tau')^i(Y-U_{\tau'})\subset U_{\tau'} \tag{$1$}
\end{equation}

Next, we observe that, by construction,  $U_{\gs'}$ and its translates under the generating set and their inverses do not intersect $U_{\tau'}$. In other words we have,
\begin{equation}\label{disjoint open sets 1}
    U_{\gs'}\cup \bigcup_{\epsilon=\pm 1}\bigcup_{j=1}^k\gamma_j^\epsilon (U_{\gs'})\subset Y-U_{\tau'} \tag{$2$}.
    \end{equation}
Since $\gamma_j(U_{\gs'})\cap U_{\tau'}=\emptyset$ iff $\gamma^{-1}_j(U_{\tau'})\cap U_{\gs'}=\emptyset$ for any $\gamma_j$, it follows from \eqref{disjoint open sets 1} that $U_{\tau'}$ and its translates under the generating set and their inverses do not intersect $U_{\gs'}$.
In other words, we have
\begin{equation}\label{disjoint open sets 2}
    U_{\tau'}\cup \bigcup_{\epsilon=\pm 1}\bigcup_{j=1}^k\gamma_j^\epsilon (U_{\tau'})\subset Y-U_{\gs'} \tag{$3$}
\end{equation}

Furthermore, we can assume (by taking $N$ large enough) that the following set
\[Y-U_{\sigma'}\cup U_{\tau'}\cup\bigcup_{\varepsilon=\pm1}\bigcup_{j=1}^k\gamma_j^\varepsilon(U_{\sigma'}\cup U_{\tau'})
\]
is nonempty and 
for convenience, let us call this set $B$.
\medskip
 
 We claim that $\gs',\tau',U_{\gs'},U_{\tau'}$, and any point $x\in B$ satisfies the three properties of Lemma \ref{t:criteria for infinite girth}.
 \medskip
 
 The first property is immediate by our choice of $x$.
 To check the second property, note that $x\in Y-U_{\gs'}$ and together with \eqref{disjoint open sets 2} this implies
\[
 \{x\}\cup U_{\tau'}\cup \bigcup_{\epsilon=\pm 1}\bigcup_{j=1}^k\gamma_j^\epsilon (U_{\tau'})\subset Y-U_{\gs'} 
 \] 
  Therefore, \eqref{gs} gives us
 \[
 (\gs')^i(\{x\}\cup U_{\tau'}\cup \bigcup_{\epsilon=\pm 1}\bigcup_{j=1}^k\gamma_j^\epsilon (U_{\tau'}))\subset U_{\gs'} \quad \text{for all } i\in \mathbb{Z}-\{0\}
 \]
 which is the desired second property.
 \medskip
 
\noindent Similarly, $x\in Y-U_{\tau'}$, together with \eqref{disjoint open sets 1} and \eqref{gs} gives us
 \[
 (\tau')^i(\{x\}\cup U_{\gs'}\cup \bigcup_{\epsilon=\pm 1}\bigcup_{j=1}^k\gamma_j^\epsilon (U_{\gs'}))\subset U_{\tau'}
 \]
 which is the desired third property.

 \noindent Therefore, $\Gir(\Gamma)=\infty$ by Theorem \ref{t:criteria for infinite girth}.
 \medskip
 \end{proof}
We will now prove the following version of Girth Alternative. The statement and its proof are inspired by  \cite[Corollary G]{Caprace_2011}.  

\begin{theorem}[Girth Alternative for lattices, version 2]\label{Girth alternative v2}
Let $X = X_1 \times \dots \times X_n$ be a product of $n$ irreducible, unbounded, and locally compact $\cat$ cube complexes such that $\aut(X_i)$ acts cocompactly and essentially on $X_i$ for all $i$.       
      
      Then for any (possibly non-uniform) lattice $\Gamma \leq \aut(X)$ acting properly on $X$, 
either $\Gamma$ is \{locally finite\}-by-\{virtually abelian\} or 
$\Gir(\Gamma)=\infty$.
\end{theorem}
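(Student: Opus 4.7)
The strategy is to invoke Theorem~\ref{premain theorem} to reduce to an exceptional case and then use properness together with Bieberbach's theorem. By Theorem~\ref{premain theorem}, if $\Gir(\Gamma) < \infty$ then some finite-index subgroup $\Gamma_0 \leq \Gamma$ fixes a point $p \in X \cup \bdry X$; after replacing $\Gamma_0$ by its normal core in $\Gamma$ we may assume $\Gamma_0$ is normal. It therefore suffices to show that this remaining case forces $\Gamma$ to be \{locally finite\}-by-\{virtually abelian\}.

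If $p \in X$, the conclusion is essentially immediate: proper actions have finite point stabilizers, so $\Gamma_0 \subseteq \mathrm{Stab}_\Gamma(p)$ is finite and hence $\Gamma$ itself is finite, which is trivially of the desired form. The substantive case is $p = \xi \in \bdry X$. Here the plan is to produce a $\Gamma_1$-invariant Euclidean flat $F \subseteq X$ for some further finite-index subgroup $\Gamma_1 \leq \Gamma_0$. To do so, first pass to a finite-index subgroup $\Gamma_1$ preserving the canonical product decomposition $X = X_1 \times \cdots \times X_n$ (Proposition~\ref{p:product decomposition}). The visual boundary of a product is a spherical join, so $\xi$ decomposes naturally with respect to the factor structure; each non-trivial component yields a point in $X_i \cup \bdry X_i$ fixed by the image $\Gamma_1^i$ of $\Gamma_1$ in $\aut(X_i)$. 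Combining essentiality, cocompactness, and irreducibility of each $X_i$ with Lemma~\ref{invariant flat}, one then arranges that $\Gamma_1^i$ virtually stabilizes a Euclidean subflat $F_i \subseteq X_i$: a single vertex if $\xi_i \in X_i$, and a line in an $\R$-like factor $X_i$ when $\xi_i \in \bdry X_i$. Product-assembling these subflats and passing to a further finite-index yields the desired $\Gamma_1$-invariant flat $F \cong \R^k$.

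With the flat in hand the argument concludes via Bieberbach. The restricted action gives a homomorphism $\rho \colon \Gamma_1 \to \Isom(\R^k)$; its kernel $K$ fixes $F$ pointwise, so it lies inside the stabilizer of any single point of $F$, and hence $K$ is finite by proper action. The image $\rho(\Gamma_1)$ inherits a proper action on $\R^k$, so it is a discrete subgroup of $\Isom(\R^k)$, and Bieberbach's theorem makes it virtually abelian. Consequently $\Gamma_1$, and therefore $\Gamma$ after one more normal-core step, is \{finite\}-by-\{virtually abelian\}, which is a fortiori \{locally finite\}-by-\{virtually abelian\}. I expect the main obstacle to lie in producing the invariant flat in the case $p \in \bdry X$: one has to track how the fixed point $\xi$ distributes across the irreducible factors via the spherical join description of $\bdry X$, and leverage essentiality together with Lemma~\ref{invariant flat} to argue that each projected image virtually stabilizes either a vertex or an $\R$-like line, so that their product is a genuine flat on which Bieberbach applies.
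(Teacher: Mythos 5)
There is a genuine gap at the heart of your reduction. You treat Theorem~\ref{premain theorem} as a black box and then try to show that the exceptional case ``$\Gamma$ virtually fixes a point $p\in X\cup\bdry X$'' forces $\Gamma$ to be \{locally finite\}-by-\{virtually abelian\}. The case $p\in X$ is fine (properness gives a finite stabilizer), but in the substantive case $\xi\in\bdry X$ your production of an invariant flat is unjustified: Lemma~\ref{invariant flat} takes as \emph{hypothesis} that $\aut(X)$ (a group acting essentially) stabilizes a flat and converts that into the $\R$-like product structure; it does not convert a fixed point in the visual boundary into an invariant flat, nor does it say that an irreducible factor admitting a lattice (or a projected image $\Gamma_1^i$) with a fixed visual boundary point must be $\R$-like. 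The relevant dichotomy in the paper (facing triple versus $\aut(X_i)$-invariant flat, \cite[Theorem 7.2]{Caprace_2011}) concerns the full automorphism group, and a factor containing facing triples can perfectly well admit subgroups fixing boundary points without preserving any line. Your ``product-assembling'' step has a similar issue: factors in which $\xi$ has trivial component give you no fixed object at all, yet you assume a fixed vertex there. Whether ``virtually fixes a point in $X\cup\bdry X$'' forces the amenable alternative (or even finite girth) is essentially the paper's open Question 4, so this implication cannot simply be asserted; once the flat is not available, the Bieberbach endgame has nothing to act on.

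The paper's actual proof avoids this issue entirely and runs in the opposite logical direction. It first invokes \cite[Theorem~1.7]{CL10} to identify, for proper actions, \{locally finite\}-by-\{virtually abelian\} with amenability, so it suffices to show non-amenable implies infinite girth. It then shows that if every irreducible factor were $\R$-like, the invariant flat could be equivariantly triangulated (Lemmas~\ref{l:positive translation length}--\ref{cubification}), and a finite-index subgroup would fix a point of the Roller boundary of the resulting flat (\cite[Lemma 2.8]{Genevois}), forcing amenability by \cite[Theorem A.5]{CL} --- a contradiction. Hence some factor is not $\R$-like; discarding the $\R$-like factors, every remaining factor contains a facing triple, and the ping-pong/Nakamura argument of Theorem~\ref{premain theorem} is rerun on that subproduct to get $\Gir(\Gamma)=\infty$. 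In other words, the flats-and-amenability machinery is used to rule out the all-$\R$-like configuration, not (as in your proposal) to analyze a group fixing a visual boundary point; the latter analysis is exactly what is missing from your argument.
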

The proof of the above will rely on the proof of Theorem~\ref{premain theorem} and a result of \cite{CL10}.
In addition, we will need the following lemmas to equivariantly triangulate  certain subcomplexes in $X$.
\begin{lemma}\label{l:positive translation length}
    Suppose $G$ acts by automorphism on a locally finite cube complex $X$. 
    Suppose $Y\subset X$ be a $G$-invariant subset and $G$ does not fix $Y$ point-wise.
    Then there exists  $p\in Y$ such that
    \[k:=\inf\{d(p,g\cdot p)\mid g\in G,p\neq g\cdot p\}>0
    \]
\end{lemma}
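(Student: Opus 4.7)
The plan is to show that any point $p \in Y$ that is moved by some element of $G$ automatically has the desired property, with the infimum bounded below by constants depending only on the local cubical structure of $X$ at $p$.

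By the hypothesis that $G$ does not fix $Y$ pointwise, there exists some $p \in Y$ with $Gp \neq \{p\}$; I fix such a $p$ and let $C$ denote the unique cube of $X$ in whose relative interior $p$ lies. For any $g \in G$ with $gp \neq p$, I split into two cases according to whether $g$ stabilizes $C$ setwise. If $gC = C$, then $g|_C$ lies in the finite group $\aut(C)$ (of order at most $2^{\dim C}(\dim C)!$), so $gp$ takes only finitely many values in $C$; the minimum positive distance
\[
\delta_1 := \min\{\,d(p, hp) : h \in \aut(C),\; hp \neq p\,\}
\]
(interpreted as $+\infty$ if the set is empty) is then a uniform positive lower bound in this case. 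If instead $gC \neq C$, the point $gp$ lies in the open cube $gC$, which is disjoint from the open cube containing $p$; by convexity of $C$, the geodesic from $p$ to $gp$ must exit through $\partial C$, giving $d(p, gp) \geq d(p, \partial C) =: \delta_2$. When $\dim C \geq 1$ one has $\delta_2 > 0$, and combining the two cases yields $k \geq \min(\delta_1, \delta_2) > 0$.

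The remaining case is $\dim C = 0$, i.e.\ $p$ is a vertex, where $\partial C = \emptyset$ and the second bound above is vacuous. Here I would appeal to local finiteness directly: since any ball $B(p, 1)$ meets only finitely many cubes and hence contains only finitely many vertices, the vertex set $V(X)$ is discrete in the CAT(0) metric, so
\[
\inf\{\,d(p, q) : q \in V(X),\; q \neq p\,\} > 0.
\]
Because any $g \in G$ with $gp \neq p$ sends $p$ to a different vertex, this infimum is itself a lower bound for $k$.

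The only mild obstacle is verifying that the lower bounds are uniform in $g$, but this is immediate: $\delta_1$, $\delta_2$, and the local vertex spacing all depend on $p$ and the combinatorics of $X$ near $p$, but not on the particular element $g$. Finite-dimensionality of $C$ and local finiteness of $X$ together with the finiteness of cubical symmetry groups are exactly what makes the argument go through.
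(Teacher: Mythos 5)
Your overall strategy is the same as the paper's: pick a point of $Y$ that is actually moved, then split into the vertex case (handled by discreteness of the vertex set, from local finiteness) and the non-vertex case, the latter subdivided into translates that stay in the cube $C$ containing $p$ (finitely many, via the finite group $\aut(C)$) and translates that land in a different cube. The paper's proof is exactly this case analysis, so the architecture matches.

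However, your justification of the subcase $gC\neq C$ has a genuine flaw. You claim that, by convexity of $C$, the geodesic from $p$ to $gp$ must exit through $\partial C$ (the union of the proper faces of $C$), whence $d(p,gp)\geq d(p,\partial C)$. This is false whenever $C$ is not top-dimensional: take $X$ a single square, $C$ its bottom edge, $p=(1/2,0)$ its midpoint, and $g$ the rotation by $\pi/2$, so $gp=(0,1/2)$ is the midpoint of the left edge. The geodesic from $p$ to $gp$ cuts straight across the interior of the square and never meets $\partial C=\{(0,0),(1,0)\}$; it leaves $C$ transversally, into a higher-dimensional cube containing $C$. So the mechanism you invoke does not exist, and the inequality $d(p,gp)\geq d(p,\partial C)$, even if it happens to hold, is unproved as written. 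The gap is easily repaired by the same finiteness reasoning you already use in the vertex case (and which is what the paper implicitly relies on when it asserts this infimum is ``clearly positive''): by local finiteness only finitely many cubes meet the ball $B(p,1)$, and if $gp$ lies in such a cube $D\neq C$ then $gp$ is the image of $p$ under one of the at most $2^{\dim C}(\dim C)!$ cube isomorphisms $C\to D$; hence only finitely many points of the orbit of $p$ lie in $B(p,1)$, and the infimum over this subcase is positive with no geodesic analysis needed.
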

\begin{proof}
By the assumption, we can choose $p\in Y$ such that $g\cdot p\neq p$ for some $g\in G$. Now we have two cases to consider.

Suppose $p$ is a vertex in $X$. Since $G$ acts by automorphisms, translates of $p$ can only be other vertices. It follows that $k$ is at least the minimum of the distances between $p$ and other vertices. Since $X$ is locally finite, it follows that $k>0$.

  If $p$ is not a vertex, then $p$ is in the interior of some cube $C$ in $X$. The infimum of distances between $p$ and all the translates of $p$ that are inside a different cube is clearly positive.
  Since there are only finitely many automophisms of $C$, there are only finitely many nontrivial translates of $p$ that stay inside $C$. Therefore the minimum distance from $p$ to any of its translates inside $C$ is also positive. The claim follows.
\end{proof}

\begin{lemma}\label{l:translation length to triangulation}
    Suppose $G$ acts on $\mathbb{R}$ by isometries such that for some $p$
    \[k:=\inf\{d(p,g\cdot p)\mid g\in G,p\neq g\cdot p\}>0.
    \]
    Then there exists a finite index subgroup of $H$ and an $H$-equivariant triangulation of $\mathbb{R}$. 
\end{lemma}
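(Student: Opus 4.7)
The plan is to exploit the very restrictive structure of the isometry group of $\mathbb{R}$. Every isometry of $\mathbb{R}$ is either a translation or a reflection, so $\Isom(\mathbb{R}) \cong \mathbb{R} \rtimes \mathbb{Z}/2\mathbb{Z}$ with the translation subgroup of index two. I would set $H := G \cap \mathbb{R}$, the subgroup of $G$ consisting of pure translations. Because the quotient $G/H$ embeds into $\mathbb{Z}/2\mathbb{Z}$, we automatically get $[G:H] \leq 2$, so $H$ is a finite index subgroup of $G$, handling the first clause of the conclusion.

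Next, I would use the hypothesis to pin down the structure of $H$. For a translation $h \in H$ with translation length $t(h) \in \mathbb{R}$, the distance $d(p, h \cdot p) = |t(h)|$ is independent of the basepoint $p$. Thus for every nontrivial $h \in H$ we have $|t(h)| \geq k > 0$. The map $H \to \mathbb{R}$ sending $h$ to $t(h)$ is therefore an injective homomorphism onto a discrete subgroup of $\mathbb{R}$. Any such subgroup is either trivial or infinite cyclic, so $H$ is either trivial or $H = \langle h_0 \rangle$ for a single translation $h_0$ of some length $t_0 > 0$.

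From here the triangulation is immediate. If $H$ is trivial, any simplicial structure on $\mathbb{R}$ (say the standard one with integer vertices) is vacuously $H$-equivariant. If $H = \langle h_0 \rangle$, I would take the triangulation whose $0$-simplices are the points $\{n t_0 : n \in \mathbb{Z}\}$ and whose $1$-simplices are the closed intervals $[n t_0, (n+1) t_0]$. Because $h_0$ sends $n t_0$ to $(n+1) t_0$ and carries each edge onto the next, this triangulation is preserved by $H$ simplicially, yielding the desired $H$-equivariant triangulation.

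Candidly, there is no real obstacle here: the argument is essentially a two-line consequence of the classification of discrete subgroups of $\mathbb{R}$ together with the fact that $\Isom(\mathbb{R})$ has the translations as an index-two subgroup. The only tiny subtlety to flag is that the hypothesis involves a fixed basepoint $p$, but since translation distances do not depend on the basepoint this causes no trouble for the translation subgroup; reflections could make $d(p, g\cdot p)$ small for specific $p$, but they are excluded by passing to $H$.
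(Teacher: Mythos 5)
Your proposal is correct and follows essentially the same route as the paper: pass to the index-(at most)-two subgroup $H$ of translations, observe that the hypothesis forces the translation lengths to be bounded below so $H$ is trivial or infinite cyclic, and place vertices at integer multiples of the minimal translation length. The only cosmetic difference is that you make explicit the basepoint-independence of $d(p,h\cdot p)$ for translations and the classification of discrete subgroups of $\mathbb{R}$, which the paper leaves implicit.
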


\begin{proof}
Let $H$ be the  index two subgroup of $G$ consisting of only the orientation preserving isometries.
In particular, $H$ acts by translation on $\mathbb{R}$ and hence is a subgroup of $\mathbb{R}$.
Either $H$ is trivial or $k>0$ implies that
\[
k':=\inf\{d(p,h\cdot p)\mid h\in H,p\neq h\cdot p\}>0
\]
It follows that $H$ is generated by the isometry that translates by $k'$ amount.
Therefore,  placing the vertices at integer multiples of $k'$, we obtain a $H$-equivariant triangulation of $\mathbb{R}$.
\end{proof}
Combining the above two lemmas we immediately get the following.
\begin{lemma}\label{cubification}
    Suppose $G$ acts by automorphisms on a locally finite $\R$-like cube complex $X$ and let $L
\subset X$ be an $\aut(X)$-invariant bi-infinite geodesic in $X$. Then there exists a finite index subgroup $H$ of $G$ and a $H$-equivariant triangulation of $L$.
\end{lemma}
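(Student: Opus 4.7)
The plan is to chain together Lemmas~\ref{l:positive translation length} and \ref{l:translation length to triangulation} after a brief verification of hypotheses; the proof should be essentially immediate, as suggested by the sentence preceding the statement.

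First I would dispatch the degenerate case: if $G$ fixes $L$ pointwise, then any triangulation of $L$ (e.g., place vertices at the integers after choosing an isometric identification $L\cong\mathbb{R}$) is automatically $G$-equivariant, so taking $H = G$ works.

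Otherwise, since $G \le \aut(X)$ and $L$ is $\aut(X)$-invariant, $L$ is in particular $G$-invariant. Because $X$ is locally finite, I can apply Lemma~\ref{l:positive translation length} with $Y = L$ to obtain a point $p\in L$ such that
\[
k := \inf\{d(p, g\cdot p) \mid g \in G,\ p \neq g\cdot p\} > 0.
\]
Fix an isometric parametrization $L \cong \mathbb{R}$; since $G$ acts on $X$ by cubical automorphisms (hence by CAT(0) isometries) and preserves $L$, the restriction of the $G$-action to $L$ is by isometries of $\mathbb{R}$. Thus the hypotheses of Lemma~\ref{l:translation length to triangulation} are satisfied for the restricted action (using the same $p$ and the same $k$), and that lemma produces a finite index subgroup $H \le G$ together with an $H$-equivariant triangulation of $L$.

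There is no real obstacle here: the only substantive point to check is that restriction preserves enough structure, namely that elements of $G$ act on $L$ by isometries of $\mathbb{R}$ (immediate from the facts that automorphisms of a CAT(0) cube complex are isometries of the CAT(0) metric and that $L$ is $G$-invariant), and that Lemma~\ref{l:positive translation length}'s conclusion about the infimum transports verbatim into the hypothesis of Lemma~\ref{l:translation length to triangulation}.
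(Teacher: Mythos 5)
Your proposal is correct and follows exactly the paper's own argument: split off the case where $G$ fixes $L$ pointwise, then apply Lemma~\ref{l:positive translation length} with $Y=L$ and feed the resulting positive infimum into Lemma~\ref{l:translation length to triangulation}. The extra remark that the restricted action on $L\cong\mathbb{R}$ is by isometries is a harmless (and welcome) bit of added care.
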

\begin{proof}
    If $G$ fixes $L$ pointwise, then any triangulation of $L$ is $G$-equivariant.
    Otherwise, we can apply  Lemma~\ref{l:positive translation length} with $Y:=L$ to conclude that $k$ as in Lemma~\ref{l:positive translation length} is positive. 
    Therefore we can apply Lemma~\ref{l:translation length to triangulation} to $L$ which yields a finite index subgroup $H$ of $G$ and an $H$-equivariant triangulation of $L$.
\end{proof}

\begin{proof}[Proof of Theorem~\ref{Girth alternative v2}]
    It follows from \cite[Theorem~1.7]{CL10} that $\Gamma$ is \{locally finite\}-by-\{virtually abelian\} if and only if $\Gamma$ is amenable.
    Thus it is enough to show that if $\Gamma$ is not amenable, then $\Gir(\Gamma)=\infty$.
    Suppose $\Gamma$ is non-amenable.
    We claim there is an irreducible factor of $X$ that is not $\R$-like.

  If each irreducible factor of $X$ is $\R$-like, then we can take the $\aut(X_i)$-invariant geodesic in each factor and take their product to get an invariant $n$-dimensional Euclidean flat.
  Let $\Gamma'$ be the finite index subgroup of $\Gamma$ that stabilizes each factor.
  In particular, $\Gamma'$ stabilizes the bi-infinite geodesic lines in each factor.
  Applying Lemma~\ref{cubification} to the action of $\Gamma'$ on each $\R$-like factor, we get a finite index subgroup of $\Gamma'$ which acts by automorphisms on some triangulation of the bi-infinite geodesic line in that $\R$-like factor. Taking the intersection of all these finite index subgroup for each factor we get another finite index subgroup $\Gamma''\le\Gamma'$ that acts by automorphisms on each triangulated geodesic line. $\Gamma''$ therefore acts by automorphisms on a Euclidean flat $Y$ where $Y$ is the product of triangulated geodesic lines.
 By \cite[Lemma 2.8]{Genevois}, the set of bounded components in the Roller boundary of $Y$ is finite and nonempty.
 These bounded components have cubical structure that is respected by the induced action of $\Gamma''$.
  Furthermore, the action of $\Gamma''$ stabilizes the union of these bounded components.
  In particular, $\Gamma''$ stabilizes some finite set of vertices in the Roller boundary and consequently a finite index subgroup of $\Gamma''$ fixes a point in the Roller boundary of $Y$.
  By \cite[Theorem A.5]{CL}, $\Gamma''$ is amenable.
  Since $\Gamma''\le\Gamma$ is a finite index subgroup, $\Gamma$ is amenable.
  This is a contradiction. 
  
    Hence at least one irreducible factor of $X$ is not $\R$-like, and hence contains a facing triple by~\cite[Theorem 7.2]{Caprace_2011}.
    Now, we can discard all the $\R$-like factors from $X$ to get a space $Y$  such that each irreducible factor of $Y$ has a facing triple.
    Now we can run the same argument as in Theorem~\ref{premain theorem} to conclude that the girth of $\Gamma$ is infinite.
\end{proof}

\begin{remark}\label{lattice vs non-lattice}
In our Girth Alternative theorems, we require the group to be a lattice in $\aut(X)$ whereas the analogous Tits Alternative theorems of Caprace--Sageev \cite{Caprace_2011} (also see \cite{sageev_wise_2005}) do not require the group to be a lattice. 
The main reason for the lattice assumption in our case is that we need regular elements that double skewer a given pair of hyperplanes in each irreducible factor to obtain infinite girth. 
And Theorem \ref{Regular elements in lattices} of \cite{Caprace_2011} gives us such elements for lattices.
Whereas, in the proof of Tits Alternative, one needs to find a free subgroup.  
In this case, we only need two group elements that double skewer distinct pairs of hyperplanes in some irreducible factor (see \cite{Caprace_2011} for details).
Interestingly, authors in \cite[Theorem 1.5]{Fernos2016Random} proved that regular elements exist whenever the group acts essentially and without fixed points in $X\cup\partial_\infty X$. Moreover, the group does not need to be a lattice in order to contain regular elements. 
This suggests that the lattice assumption may be dropped from Theorem~\ref{premain theorem}.
 However, we do not know how to use the probabilistic methods of \cite{Fernos2016Random} to produce an abundance of regular elements concretely without the lattice assumption.
\end{remark}

\begin{remark}\label{lattice vs non-lattice2}
    Another way of getting rid of the lattice assumption is the following. Note that $\Gamma$ has a finite index subgroup $\Gamma'$ that stabilizes each irreducible factor factor by Proposition~\ref{p:product decomposition}. With that in mind, we can run the same argument as in Theorem~\ref{premain theorem}, but only working with the $\Gamma'$-action on $X_1$ to prove that either $\Gamma$ virtually fixes a point or $\Gir(\Gamma')=\infty$ without assuming $\Gamma$ to be lattice: since we can forget about the other factors, we only need $\sigma$ and $\tau$ to act as contracting isometries on $X_1$, and such elements always exist without the lattice assumption by \cite{Caprace_2011}.
    So, another way to remove the lattice assumption in Theorem~\ref{premain theorem} would be to show that  $\Gir(\Gamma')=\infty$ implies $\Gir(\Gamma)=\infty$. 
     Although it seems plausible, we do not know whether containing a finite index subgroup with infinite girth is enough to guarantee the group itself has infinite girth.
\end{remark}

\section{Curious Examples}\label{section:counterexample}

As promised in the introduction, we construct spaces which are \textbf{R}-like with particular group actions showing that Corollary~\ref{intro:v3} does not hold if both the assumptions properness and having a bound on the order of finite subgroups are omitted.
\medskip

For a cube complex $X$, we call a pair of vertices $v,v^*\in X$ {\bf diametrically opposed} if there exists a labeling of the halfspaces of $X$ so that $\theta(v)=\{\h\;|\;\hat{\h}\in\hat{\mathcal{H}}\}$ and $\theta(v^*)=\{\h^*\;|\;\hat{\h}\in\hat{\mathcal{H}}\}$. Note that a CAT(0) cube complex $X$ contains diametrically opposed vertices if and only if there is a geodesic path between vertices that crosses every hyperplane in $X$.
\medskip

If $X$ and $Y$ are CAT(0) cube complexes each containing a pair of diametrically opposed vertices, then their product $X\times Y$ will contain a pair of diametrically opposed vertices. Indeed, if $\{\h_i\}$ and $\{\h_i^*\}$ define vertices $x,x^*\in X$, and $\{\mathfrak{k}_i\}$ and $\{\mathfrak{k}_i^*\}$ define vertices $y,y^*\in Y$, then $(x,y)$ and $(x^*,y^*)$ give diametrically opposed vertices in $X\times Y$.

\begin{definition}[Cube Complex $\mathcal{L}{[X]}$]
    Begin with a CAT(0) cube complex $X$ which contains diametrically opposed vertices $v$ and $v^*$. For all $i\in\mathbb{Z}$, let $X_i$ be an isomorphic copy of $X$, where $v_i$ and $v_i^*$ denote the images of $v$ and $v^*$, respectively. The cube complex $\mathcal{L}[X]$ is then formed by identifying $v_i^*$ with $v_{i+1}$ for all $i\in\mathbb{Z}$. We think of $\mathcal{L}[X]$ informally as a ``line of $X$'s."
\end{definition}
\medskip

The space $\mathcal{L}[X]$ is clearly a cube complex as the map attaching distinct copies of $X$ is an isometry of $v$. That $\mathcal{L}[X]$ is locally CAT(0) is also clear, since the link of a vertex labeled $v$ (or $v^*$) in $\mathcal{L}[X]$ is isomorphic to the disjoint union of two copies of the link of $v$ in $X$. Lastly, $\mathcal{L}[X]$ is simply-connected by construction.
\medskip

There is a natural action of $\mathbb{Z}$ on $\mathcal{L}[X]$ whereby each copy $X$ is shifted to an adjacent copy. With $\aut_{v,v^*}(X)\le \aut(X)$ denoting the automorpisms fixing both $v$ and $v^*$, $\Sigma_X=\oplus \aut_{v,v^*}(X)$ acts on $\mathcal{L}[X]$ by allowing only finitely many coordinate groups to act nontrivially at a time. Clearly $\mathbb{Z}\cap \Sigma_X$ is the identity element. We are interested in the semi-direct product $\mathbb{Z}\ltimes \Sigma_X\cong\aut_{v,v^*}(X)\wr\mathbb{Z}$, where multiplication is defined as $(\sigma_1,\tau^m)(\sigma_2,\tau^n)\;=\;(\sigma_1\tau^m\sigma_2\tau^{-m},\tau^{m+n})$ and $(\sigma,\tau^n)^{-1}=(\tau^{-n}\sigma^{-1}\tau^n,\tau^{-n})$. 
Note that $\mathbb{Z}\ltimes\Sigma_X$ is generated by the group $\aut_{v,v^*}(X)$ and a single shift. 
\medskip

We define a group action of $\mathbb{Z}\ltimes\Sigma_X$ on $\mathcal{L}[X]$ by 
\[
    (\sigma,\tau^n)\cdot (x)\;=\;\sigma\tau^n\cdot(x)
\]
 
This action is faithful since $(\sigma,\tau^n)$ acts trivially if and only if both $\sigma$ and $\tau$ are the identity element in $\Sigma_X$ and $\mathbb{Z}$, respectively. Since it acts faithfully, there is an injective map $\mathbb{Z}\ltimes\Sigma_X\to\aut(\mathcal{L}[X])$ permitting us to think of $\mathbb{Z}\ltimes\Sigma_X$ as a subgroup of the automorphism group.
\medskip

We highlight two additional properties of $\mathbb{Z}\ltimes\Sigma_X$. It acts cocompactly on $\mathcal{L}[X]$ as any copy of $X$ gives a compact set whose translates cover $\mathcal{L}[X]$. And, since any copy of $X$ can be mapped to any other copy of $X$, the action of $\mathbb{Z}\ltimes\Sigma_X$ on $\mathcal{L}[X]$ is essential.
Next, we will show that $\mathbb{Z}\ltimes\Sigma_X$ satisfies a law and thus, for nontrivial $\Sigma_X$, has finite girth.
\medskip

Recall that a group $\Gamma$ {\bf satisfies a law} if there is a word $w(x_1,x_2,\ldots,x_n)$ on $n$ letters such that $w(\gamma_1,\gamma_2,\ldots,\gamma_n)=1$ in $\Gamma$ for any $\gamma_1,\gamma_2,\ldots,\gamma_n\in \Gamma$. That $\mathbb{Z}\ltimes\Sigma_X$ satisfies a law can be shown from observing that both $\Sigma_X$ and ${\mathbb{Z}\ltimes\Sigma_X}/\Sigma_X\cong\mathbb{Z}$ satisfy a law. However, the space $\mathcal{L}[X]$ provides a geometric proof which we provide.

\begin{lemma}\label{lem: satisifes law}
    For a compact $\cat$ cube complex $X$ which has diametrically opposed vertices, the group $\mathbb{Z}\ltimes\Sigma_X$ satisfies a law.
\end{lemma}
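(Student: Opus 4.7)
The plan is to exhibit an explicit nontrivial two-variable law of the form $w(x,y) = [x,y]^n$, where $n := |\aut_{v,v^*}(X)|$. The strategy rests on two observations: that $\aut_{v,v^*}(X)$ is a finite group (because $X$ is compact), and that the commutator of any two elements of $\mathbb{Z} \ltimes \Sigma_X$ always lands inside $\Sigma_X$ (because $\mathbb{Z}$ is abelian).

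First I would observe that, since $X$ is compact and cubical, it has only finitely many cubes, so $\aut(X)$ is a finite group, and hence so is its subgroup $F := \aut_{v,v^*}(X)$, of some order $n$. By Lagrange, every element of $F$ satisfies $f^n = 1$; and since $\Sigma_X = \bigoplus_{\mathbb{Z}} F$ is a direct sum of copies of $F$ with coordinate-wise operations, every element $\sigma \in \Sigma_X$ likewise satisfies $\sigma^n = 1$.

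Next, for any $g_i = (\sigma_i, \tau^{m_i}) \in \mathbb{Z} \ltimes \Sigma_X$ with $i = 1,2$, I would argue that $[g_1, g_2] \in \Sigma_X$ by passing to the abelian quotient $(\mathbb{Z} \ltimes \Sigma_X)/\Sigma_X \cong \mathbb{Z}$, in which every commutator is trivial. A direct check using the multiplication rule stated in the setup confirms this at the level of coordinates: the $\tau$-component of $[g_1, g_2]$ is $\tau^{m_1 + m_2 - m_1 - m_2} = \tau^0$. Combining the previous paragraph with this containment yields $[g_1, g_2]^n = 1$ for all $g_1, g_2$, so the word $[x,y]^n$ is the desired law.

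For the geometric flavor promised in the statement of the lemma, the same argument can be read off the action on $\mathcal{L}[X]$: cancellation of translation components forces the commutator of any two elements of $\mathbb{Z} \ltimes \Sigma_X$ to preserve every copy $X_i$ setwise, and on each $X_i$ it restricts to an element of the finite group $F$; raising to the $n$-th power then trivializes the action on every copy simultaneously. No serious obstacle is anticipated — the only point requiring care is the finiteness of $\aut(X)$ when $X$ is compact, which is immediate since each automorphism is a permutation of the finitely many cubes of $X$.
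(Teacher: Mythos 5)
Your proof is correct, and it is essentially the argument the paper has in mind: the law is the same word $[x,y]^n$, obtained from the two facts that every commutator lies in $\Sigma_X$ (the quotient by $\Sigma_X$ being $\mathbb{Z}$, i.e.\ the translation parts cancel) and that $\Sigma_X$ has finite exponent because $\aut_{v,v^*}(X)$ is finite for compact $X$. The only cosmetic difference is that you argue algebraically inside $\mathbb{Z}\ltimes\Sigma_X$ --- a route the paper explicitly acknowledges (``both $\Sigma_X$ and the quotient satisfy a law'') --- whereas the paper's written proof phrases the same two steps geometrically via the faithful action on $\mathcal{L}[X]$: the commutator preserves each copy of $X$, so its $|\aut(X)|$-th power acts trivially and hence is the identity; your choice of exponent $|\aut_{v,v^*}(X)|$ works just as well.
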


\begin{proof}
     Let $|\aut(X)|=n$ and $x=(\sigma_1,\tau^k)$ and $y=(\sigma_2,\tau^l)$ be elements in $\mathbb{Z}\ltimes\Sigma_X$. Note that the commutator $[x,y]$ acting on $\mathcal{L}[X]$ maps each copy of $X$ to itself. Clearly then, $[x,y]^{n}$ acts trivially. Since the action is faithful, $[x,y]^{n}$ must then be the identity.
\end{proof}

Although a sketch of the proof for the following result can be found elsewhere in the literature (see Proposition 1 in \cite{dyubina2000instability}), we provide a proof for completeness. 

\begin{lemma}\label{lem:direct sum not virt solv}
    If a group $H$ is not solvable, then $\oplus_\mathbb{Z}H$ is not virtually solvable.
\end{lemma}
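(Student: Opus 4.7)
The plan is to argue by contradiction. Assume $G := \oplus_\mathbb{Z} H$ is virtually solvable; I will deduce that $H$ must be solvable, contradicting the hypothesis.

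First, note that $H$ is a quotient of $G$ via projection onto any single coordinate, and a quotient of a virtually solvable group is virtually solvable, so $H$ is virtually solvable. Fix a normal solvable subgroup $N \trianglelefteq H$ of finite index (one exists by taking the normal core of a finite-index solvable subgroup). Since an extension of a solvable group by a solvable group is solvable, the finite quotient $F := H/N$ must itself be non-solvable.

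Next, I pass from $G$ to a finite coefficient group. The coordinate-wise subgroup $\oplus_\mathbb{Z} N$ is normal in $G$ with quotient $\oplus_\mathbb{Z}(H/N) = \oplus_\mathbb{Z} F$, hence $\oplus_\mathbb{Z} F$ is virtually solvable. Any finite non-solvable group has a non-abelian finite simple subquotient $S$ (for instance, a composition factor of the perfect radical $F^{(\infty)} = \bigcap_n F^{(n)}$, which is non-trivial and perfect, hence has a non-abelian simple quotient). Since both subgroups and quotients preserve virtual solvability, $\oplus_\mathbb{Z} S$ would be virtually solvable as well, and the task reduces to showing that $\oplus_\mathbb{Z} S$ is not virtually solvable for any finite non-abelian simple $S$.

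Finally, suppose for contradiction that $K \leq \oplus_\mathbb{Z} S$ were solvable of finite index $m$. For each $N \geq 1$, the intersection $K \cap S^N$ is solvable and has index $\leq m$ in the natural subgroup $S^N \leq \oplus_\mathbb{Z} S$. Each coordinate projection $\pi_i(K \cap S^N) \leq S$ is a solvable subgroup of the non-solvable simple group $S$, hence proper, so $[S : \pi_i(K \cap S^N)] \geq 2$. From the containment $K \cap S^N \leq \prod_{i=1}^N \pi_i(K \cap S^N)$ we obtain $[S^N : K \cap S^N] \geq 2^N$, which contradicts $[S^N : K \cap S^N] \leq m$ as soon as $N > \log_2 m$.

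The main obstacle is organizing the reductions: because $H$ may itself be an infinite virtually solvable group (as the target application allows), no direct coordinate-projection argument on $\oplus_\mathbb{Z} H$ gives quantitative index bounds. The non-solvability of $H$ must first be converted into a finite non-solvable quotient $F = H/N$, and then further into a non-abelian simple subquotient $S$, at which point the elementary product-index estimate closes the argument.
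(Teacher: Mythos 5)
Your proof is correct, but it takes a genuinely different route from the paper. You argue by contradiction and reduce in stages: first to a finite non-solvable quotient $F=H/N$, then to a finite non-abelian simple subquotient $S$, and finally you rule out a finite-index solvable subgroup $K\leq\oplus_\mathbb{Z}S$ by the quantitative estimate $[S^N:K\cap S^N]\geq 2^N$, using that every coordinate projection of $K\cap S^N$ is a proper (solvable) subgroup of $S$. The paper instead works directly with an arbitrary finite-index subgroup $G\leq\oplus_\mathbb{Z}H$ and shows that some coordinate projection $\pi_i\colon G\to H$ must be surjective: if not, one chooses $a_i\notin\pi_i(G)$ for every $i$ and checks that the elements $b_i$ supported on the $i$-th coordinate lie in pairwise distinct cosets of $G$, contradicting finite index; hence $G$ surjects onto the non-solvable group $H$ and cannot be solvable. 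The paper's argument is shorter and completely elementary (no appeal to normal cores, composition factors, or perfect radicals), and it in fact contradicts the obstacle you cite at the end: a direct coordinate-projection argument does work on $\oplus_\mathbb{Z}H$, because one does not need quantitative index bounds --- failure of surjectivity in every coordinate already forces infinite index. What your approach buys is a sharper quantitative statement at the finite-simple level (the index of any solvable subgroup of $S^N$ grows at least like $2^N$), at the cost of routing through finite group theory.
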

\begin{proof}
    Let $G\leq\oplus_\mathbb{Z}H$ be a finite index subgroup. We claim that, for some $i$, the projection map $\pi_i:G\rightarrow H$ to the $i^{th}$ factor is surjective. If not, then for each $i$ there is a nontrivial element $a_i$, in the $i^{th}$ factor, with $a_i\notin \pi_i(G)$. Consider the elements $b_i=(\ldots,e, a_i,e,\ldots)$ in the direct sum, where $a_i$ appears in the $i^{th}$ position. It follows that $\pi_i(b_ib_j^{-1})=a_i$ for $i\neq j$, implying that $b_ib_j^{-1}\notin G$ for all such $i,j$. Then, $b_iG=b_jG$ if and only if $b_i=b_j$. Since $b_i$ is nontrivial for all $i\in\mathbb{Z}$, $G$ has countably many distinct cosets and hence cannot be a finite index subgroup. Thus the map $\pi_i:G\rightarrow H$ is a surjection for some $i$, and we conclude that a finite index subgroup $G$ of $\oplus_\mathbb{Z}H$ necessarily surjects onto $H$. Since $H$ is non solvable, $G$ is non solvable.
\end{proof}
Now, consider the subgroup $\Sigma_X\le\mathbb{Z}\ltimes\Sigma_X$. By Lemma \ref{lem:direct sum not virt solv}, if $\aut_{v,v^*}(X)$ is not solvable then $\Sigma_X$ is not virtually solvable, and hence neither is $\mathbb{Z}\ltimes\Sigma_X$. This fact combines with lemma \ref{lem: satisifes law} to give the following.

\begin{example}
    If $\aut_{v,v^*}(X)$ is non-solvable then $\mathbb{Z}\ltimes\Sigma_X$ provides an example of a group acting essentially and cocompactly on a finite dimensional CAT(0) cube complex that is not virtually solvable yet satisfies a law. Consequently, this example shows that the Girth Alternative, as stated in Corollary~\ref{intro:v3}, fails for the class of groups acting cocompactly on finite dimensional CAT(0) cube complexes.
    \medskip

    As a special instance, for $X=I^n$, $\aut_{v,v^*}(X)\cong S_n$. Thus, for $n\geq5$, $\mathbb{Z}\ltimes\Sigma_X\cong S_n\wr\mathbb{Z}$ provides a more concrete example.
\end{example}

\medskip

\section{Questions and Remarks}

\begin{enumerate}
    \item Can we drop the lattice assumption in Theorem \ref{premain theorem} and instead assume that $\Gamma$ acts essentially on $X$?
    
   \noindent As suggested in Remark \ref{lattice vs non-lattice}, an affirmative answer to the above question depends on an affirmative answer to the following question: 
 Can we drop the `lattice' assumption in Theorem \ref{Regular elements in lattices} and instead assume that $\Gamma$ acts essentially on $X$? 

 \item Remark~\ref{lattice vs non-lattice2} motivates us to ask the following question.
 Suppose $X=X_1\times X_2\times \ldots\times X_n$ where $X_i$ are irreducible and $\Gamma\le\aut(X)$. Let $\Gamma'$ be the finite index subgroup of $\Gamma$ that stabilizes each irreducible factor $X_i$.
 If $\Gir(\Gamma')$ is infinite, must $\Gir(\Gamma)$ be infinite?

        \item Can we generalize any of statements of theorem \ref{intro:v2} through \ref{intro:v1} to groups acting on non-locally compact cube complexes? This question is again motivated by the analogous Tits Alternatives of \cite{Caprace_2011}, where local compactness was not required.
    
     \item Are the alternatives in Theorem~\ref{premain theorem} mutually exclusive? In other words, under the hypothesis of Theorem~\ref{premain theorem}, if $\Gamma$ virtually fixes some point in $X\cup \partial_\infty X$, does that imply $\Gir(\Gamma)<\infty$.
\end{enumerate}

\section{Acknowledgements.}

We are very grateful to Azer Akhmedov for the valuable suggestions and comments on the initial draft of the paper. We are thankful to Pierre-Emmanuel Caprace for helpful conversations.

 \printbibliography

\end{sloppypar}
\end{document}